\newcommand{\R}{\mathbb{R}}
\newcommand{\diam}{\mbox{Diam}}
\def\H{{\mathbb H}}
\def\H{{\mathbb{H}}}
\newtheorem{thm}{Theorem}[section]
\newtheorem{lem}[thm]{Lemma}
\newtheorem{coro}[thm]{Corollary}
\newtheorem{propo}[thm]{Proposition}
\theoremstyle{definition}		
\newtheorem{ex}[thm]{Example}
\newtheorem{rem}[thm]{Remark}
\numberwithin{equation}{section}
\renewcommand{\epsilon}{\varepsilon}
\begin{document}

\title[Hyperbolic Projections]{Uniqueness Results for Bodies of Constant Width in the Hyperbolic Plane}
\author{M. Angeles Alfonseca, Michelle Cordier, Dan I. Florentin}

\begin{abstract}
Following Santal\'o's approach, we prove several characterizations of
a disc among bodies of constant width, constant projections lengths,
or constant section lengths on given families of geodesics.
\end{abstract}

\maketitle

\section{Introduction}
The reconstruction of a convex body from quantitative lower dimensional information (such as the areas of sections or projections) is one of the main problems in geometric tomography. One of the main results in this area is Aleksandrov's Theorem. He proved that an origin-symmetric convex body in $\mathbb{R}^n$ is uniquely determined by the $(n-1)$-dimensional volumes of its projections (see \cite[Theorem 3.3.6]{Gardner2}). The focus of the current note is the study of reconstruction problems in the hyperbolic plane $\mathbb{H}^2$.

The question of uniqueness plays a central role in the study of reconstruction, {\it i.e.}, do there exist two bodies $K$ and $L$ with the same projections (or sections) on a given family of geodesic lines? Often, it is assumed that one of the bodies is a disc, and the problem is stated as: 

Does there exist a body $K$ with constant projections or sections on a given family of geodesics, which is not a disc?

The width of a smooth convex body in the hyperbolic plane is defined to be its projection lengths on the family of normal lines (see the precise definition in Section 2). An important difference between the geometry in the Euclidean and the hyperbolic plane is that, while the concepts of width o f a body and its projections lengths coincide in the Euclidean setting, they capture different information in the hyperbolic one. For example, any Euclidean planar body of constant width has constant projection lengths on every line, but this no longer holds in the hyperbolic plane, see Remark \ref{rem_min-proj-arbitrary}. 

Bodies of constant width in hyperbolic spaces have been studied  in the literature, and several analogues of Euclidean results have been established (see e.g. \cite{Ara}, \cite{Dek}, \cite{GRST}, \cite{Jeronimo}, \cite{Leicht}, \cite{Sant} and references therein). Problems about sections of convex bodies in hyperbolic spaces (related to the Busemann-Petty problem) have been studied in \cite{HY}, \cite{Y}. In this paper we study  bodies of constant width by their sections or projections on all lines through a given point. In particular, we consider the following four classes of convex bodies in $\mathbb{H}^2$:
{\sl
\begin{enumerate}[$(i)$]
    \item origin-symmetric bodies.
    \item Constant width bodies.
    \item Bodies of constant projection lengths on all lines through a point.
    \item Bodies of constant sections lengths on all lines through a point.
\end{enumerate}
}
We show that the disc in $\mathbb{H}^2$ is uniquely characterized by any two of the above properties. Some of these characterizations follow immediately from the definitions (e.g. origin symmetry plus constant section lengths), but are included for completeness. Our main results are:

\bigskip
{\bf Theorem 4.1.} {\it 
Let $K\subset\mathbb{H}^2$ be a $C^1$-smooth convex body. If $K$ is origin-symmetric and has constant width, then $K$ is a disc.}
\bigskip

{\bf Theorem 4.4.} {\it 
Let $K\subset\mathbb{H}^2$ be a $C^1$-smooth convex body, containing the origin in its interior. If $K$ is a body of constant width and all projections of $K$ on the geodesics passing through the origin have constant length, then $K$ is a disc.}
\bigskip

Theorem \ref{Thm_cw+H0=disk} is a result that has no Euclidean analog, since in $\mathbb{E}^2$ having constant width and constant projection lengths is the same property. A version of Theorem \ref{width-sym} was proven in \cite{Jeronimo},
where the authors use a different definition for width than the one we present below.
\bigskip

The paper is organized as follows. In Section \ref{defs} we introduce the needed definitions and notation. In Section \ref{counterex} we collect several auxiliary lemmas that will be needed for the main results. In Section \ref{unique} we prove Theorems 4.1 and 4.4. We also include a unique reconstruction result for general convex bodies from projection lengths on two families of geodesics. Finally, the Appendix contains some computations for the hyperbolic Reuleaux triangle.

\bigskip

{\it Acknowledgments:} We would like to thank Dmitry Ryabogin and Vlad Yaskin for many fruitful discussions about this paper, in particular for the result in Section 4.2.

\section{Definitions and Notation}\label{defs}

Throughout the paper, we work interchangeably in the Poincar\'e disc model and the Poincar\'e upper half-plane model of the hyperbolic plane $\mathbb{H}^2$. We refer to the book \cite[Volume 2, Chapter 19]{B} for definitions and properties of the hyperbolic plane and its various models. Here we recall some basic facts: Given two points $x,y \in \mathbb{H}^2$, there is a unique geodesic line joining them. Also, both models are conformal, and hence angles and the notion of perpendicularity are defined as in the Euclidean setting. In particular, given a point $x$ on a geodesic $\Gamma$ and an angle $\alpha$  there exists a unique geodesic passing through $x$ and forming an angle $\alpha$ with $\Gamma$. We use the disc model when studying centrally symmetric bodies, while the upper half-plane model is well suited to the geometry of the proof of Lemma \ref{maxproj}. In only one instance (Lemma \ref{HB}) we use the Beltrami-Klein disc model, in which geodesics are straight lines; this allows us to directly use the corresponding Euclidean property. For more detail on the Beltrami-Klein disc model see \cite{R}.

A body $K \subseteq \mathbb{H}^2$ is a compact set that is equal to the closure of its non-empty interior. A body $K$ is called convex if its boundary cannot be intersected by a geodesic in more than two points (with the exception that an arc of a geodesic may be part of the boundary). Any geodesic that has only a common point with $\partial K$ or with a complete arc of $\partial K$ is called a {\it supporting geodesic} of $K$. We will often assume that the boundary of $K$ is $C^1$-smooth and contains no geodesic segments.

Given two points $x,y \in \mathbb{H}^2$, we denote by $[x,y]$ the unique geodesic segment joining them, and by $\big|[x,y]\big|$ the length of this segment. Given a geodesic $\Gamma$ in $\mathbb{H}^2$ and a point $x\in \mathbb{H}^2$, we denote by $n_\Gamma(x)$ the perpendicular (or normal) geodesic to $\Gamma$ passing through $x$. In particular, if $\partial K$ is $C^1$-smooth, given $x\in \partial K$ and letting $\Gamma$ be a supporting geodesic of $K$ at $x$, we will write $n_K(x)$ (instead of $n_\Gamma(x)$) for the normal geodesic to $\partial K$ at the point $x$. The set  $\mathcal{N}_K=\{n_K(x): x\in \partial K\}$ of all normal geodesics to $\partial K$ will be called the normal field of $K$. A geodesic $\Gamma \in \mathcal{N}_K$ is called a {\it double normal} geodesic of $K$ if it intersects $\partial K$ perpendicularly at two points. 

The {\it projection} of the body $K$ onto a geodesic $\Gamma$ is defined as 
\[	
    P_\Gamma(K) = \{x\in \Gamma \,|\, K\cap n_\Gamma(x) \neq \emptyset \}.
\]
Several non-equivalent definitions of width have been introduced in previous works  (see \cite{Sant} and \cite{Jeronimo}). We follow Santal\'o's definition \cite{Sant}. Given $x\in \partial K$ and the normal geodesic $n_K(x)$, let $\Gamma_2$ be another supporting geodesic of $K$ which is also orthogonal to $n_K(x)$. If $\phi(x)$ is the point of intersection of $n_K(x)$ and $\Gamma_2$, we define the width of $K$ at $x$ by $w_K(x)=\big|[x,\phi(x)]\big|$  (see Figure \ref{width}, left). We note that the width of $K$ at $x$ equals the length of the projection  $P_{n_K(x)}(K)$, but it need not equal the length of the projection of $K$  on another geodesic perpendicular to $\Gamma_2$ (such as the geodesic passing through $x'$ in Figure \ref{width}, right). Thus, we see that in contrast with the Euclidean situation, there is no natural way of defining the support function of a convex body in the hyperbolic setting.

\begin{figure}[h]
    \centering
    \includegraphics[height=.8in]{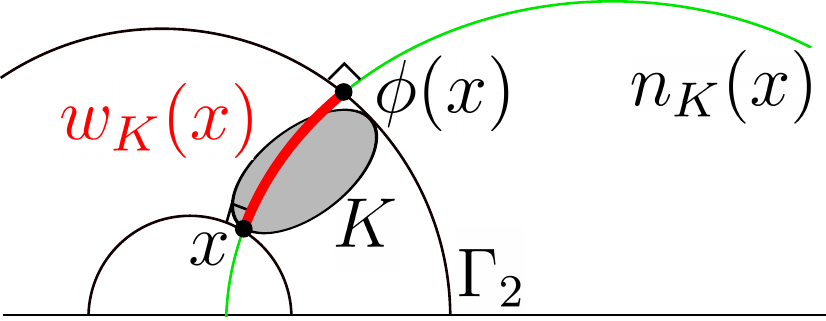} $~~~~~$
    \includegraphics[height=.8in]{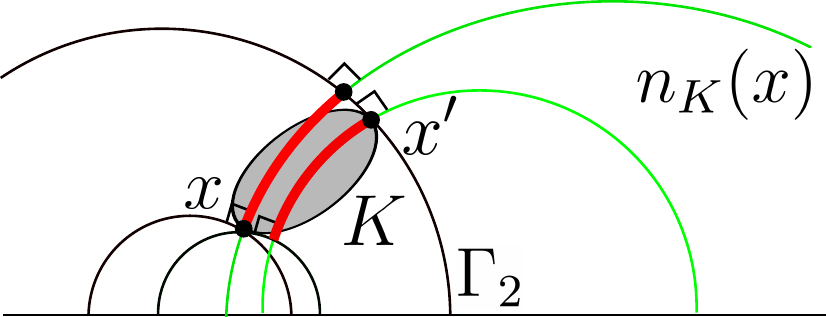}
    \caption{The widths $w_K(x)$ and $w_K(x')$.}
    \label{width}
\end{figure}

With this definition, bodies of constant width are, like in the Euclidean case, precisely those for which every normal geodesic is double normal ({\it i.e.,} for each $x\in \partial K$, the corresponding point $\phi(x)$ is also on $\partial K$; see \cite[pg. 410]{Sant} for details). A body of constant width $K$ in $\mathbb{E}^2$ has constant projection lengths on every line, but in $\mathbb{H}^2$ this is not the case; in fact one can always find a geodesic on which $K$ has an arbitrarily small projection length. 

A point $p$ is said to be equichordal to a body $K$ if all intersections of $K$ by lines through $p$ have equal lengths. The set of all lines passing through a fixed point $p\in \mathbb{H}^2$ will be denoted by $H_p$. 
Given a geodesic $\Gamma$, we denote by $\Gamma^\perp$ the set of all the geodesics perpendicular to $\Gamma$.

Let $K$ be a convex body in the Poincar\'e disc model of $\mathbb{H}^2$, containing the origin $O$ in its interior. We say that $K$ is origin-symmetric if for every $x\in K$, we have $-x \in K$. 

\section{Auxiliary Lemmas}\label{counterex}
In this section we state some basic metric facts regarding convex bodies
and their orthogonal projections in the hyperbolic plane. The first one is the characterization of a hyperbolic convex body as the intersection of all half-planes containing it. We prove this using the Beltrami-Klein disc model, where the geodesics are the chords of the (open) disc $D$, and thus the compact convex sets in $D\cong\H^2$ correspond to the compact convex subsets of the open disc in $\R^2$.

 \begin{lem}
 \label{HB}
 Let $K\subset \H^2$ be a convex body. Then $K$
 equals the intersection of all the half-planes containing it.
 \end{lem}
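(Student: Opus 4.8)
The plan is to use the Beltrami-Klein disc model, where geodesics become Euclidean straight lines and hyperbolic convex bodies correspond exactly to Euclidean convex subsets of the open disc $D$, as noted in the excerpt. Under this correspondence, a hyperbolic half-plane bounded by a geodesic $\Gamma$ becomes the intersection of $D$ with a Euclidean half-plane bounded by the chord $\Gamma$, and a supporting geodesic of $K$ becomes a Euclidean supporting line. Thus the statement is reduced to the familiar Euclidean fact that a compact convex set equals the intersection of all the closed half-planes containing it.

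First I would fix the model identification: let $\tilde K \subset D \subset \R^2$ be the image of $K$ under the Beltrami-Klein map, a compact convex subset of the open disc. For each hyperbolic half-plane $\mathcal{H}$ containing $K$, its image is $D \cap \mathcal{H}^e$ for some Euclidean closed half-plane $\mathcal{H}^e$. The intersection of all hyperbolic half-planes containing $K$ therefore corresponds to $D \cap \bigcap \mathcal{H}^e$, the intersection taken over all Euclidean half-planes $\mathcal{H}^e$ whose trace on $D$ contains $\tilde K$. The inclusion $\tilde K \subseteq D \cap \bigcap \mathcal{H}^e$ is immediate since $\tilde K$ lies in each such half-plane by hypothesis.

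For the reverse inclusion I would invoke the Euclidean separation theorem. Suppose $p \in D$ with $p \notin \tilde K$. Since $\tilde K$ is compact and convex and $\{p\}$ is a disjoint compact convex set, there is a Euclidean hyperplane strictly separating them; equivalently there is a closed Euclidean half-plane $\mathcal{H}^e$ containing $\tilde K$ but not $p$. The corresponding chord may be taken to lie in $D$ (after translating the separating line so that it meets the open disc, which is possible because both $\tilde K$ and $p$ lie inside $D$), so $\mathcal{H}^e$ restricts to a genuine hyperbolic half-plane $\mathcal{H}$ containing $K$ but with $p \notin \mathcal{H}$. Hence $p$ does not belong to $D \cap \bigcap \mathcal{H}^e$, which gives $D \cap \bigcap \mathcal{H}^e \subseteq \tilde K$ and completes the equality. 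Translating back through the model yields the lemma.

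The main obstacle I anticipate is purely bookkeeping rather than conceptual: one must check that the Euclidean separating line can always be realized as an honest \emph{chord} of $D$, i.e. that the separating half-plane's boundary genuinely crosses the open disc and is not tangent to or disjoint from it. This is ensured by choosing the separating line close enough to $\tilde K$ (for instance a supporting line of $\tilde K$ at the nearest-point projection of $p$ onto $\tilde K$), since $\tilde K$ is a body with nonempty interior sitting strictly inside $D$. Once this technical point is handled, the argument is a direct transfer of the standard Euclidean supporting-hyperplane characterization of convexity.
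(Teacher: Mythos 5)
Your proof is correct and follows essentially the same route as the paper's: pass to the Beltrami--Klein model and reduce to the Euclidean fact that a compact convex set equals the intersection of its containing half-planes. Your extra care in checking that the separating line can be realized as an honest chord of $D$ (via a supporting line at the nearest-point projection of $p$ onto $\tilde K$) fills in a detail that the paper's one-line chain of equalities leaves implicit.
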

 
 \begin{proof}
 In the Beltrami-Klein disc model, for any supporting half-plane $H$ we denote by $\Tilde{H}$ the (unique) half-plane of $\R^2$ such that $H=\Tilde{H}\cap D$.
 \[
 \bigcap_{K\subset H} H= 
 \bigcap_{K\subset H} (\Tilde{H}\cap D)=
 \left(\bigcap_{K\subset H} \Tilde{H}\right)\cap D =
 K\cap D = K,
 \]
 where the third equality holds since in the Euclidean plane, a convex sets equals the intersection of all containing half-planes.
 \end{proof}

\begin{rem}\label{symmetry}
The projection of a centrally symmetric convex body $K$ is symmetric, hence knowing the lengths of the projections on  geodesic from $H_0$ determines the half-planes containing $K$. Without central symmetry the lengths of projections on $H_0$ do not determine the body, as seen in the following example.
\end{rem}

\begin{ex}\label{ears-remark}
We present two non-congruent polygons $P_1,P_2$, that have equal lengths of projections on $H_0$. We will follow the construction in the Euclidean plane from \cite[Theorem 3.3.15]{Gardner2}. In Figure \ref{ears}, the polygons $P_1$ and $P_2$ are presented in the upper half-plane, and the origin from the  Beltrami-Klein disc is mapped to the point $(0,1)$ on the upper half-plane.

\begin{figure}[h]
    \centering
    \includegraphics[height=2in]{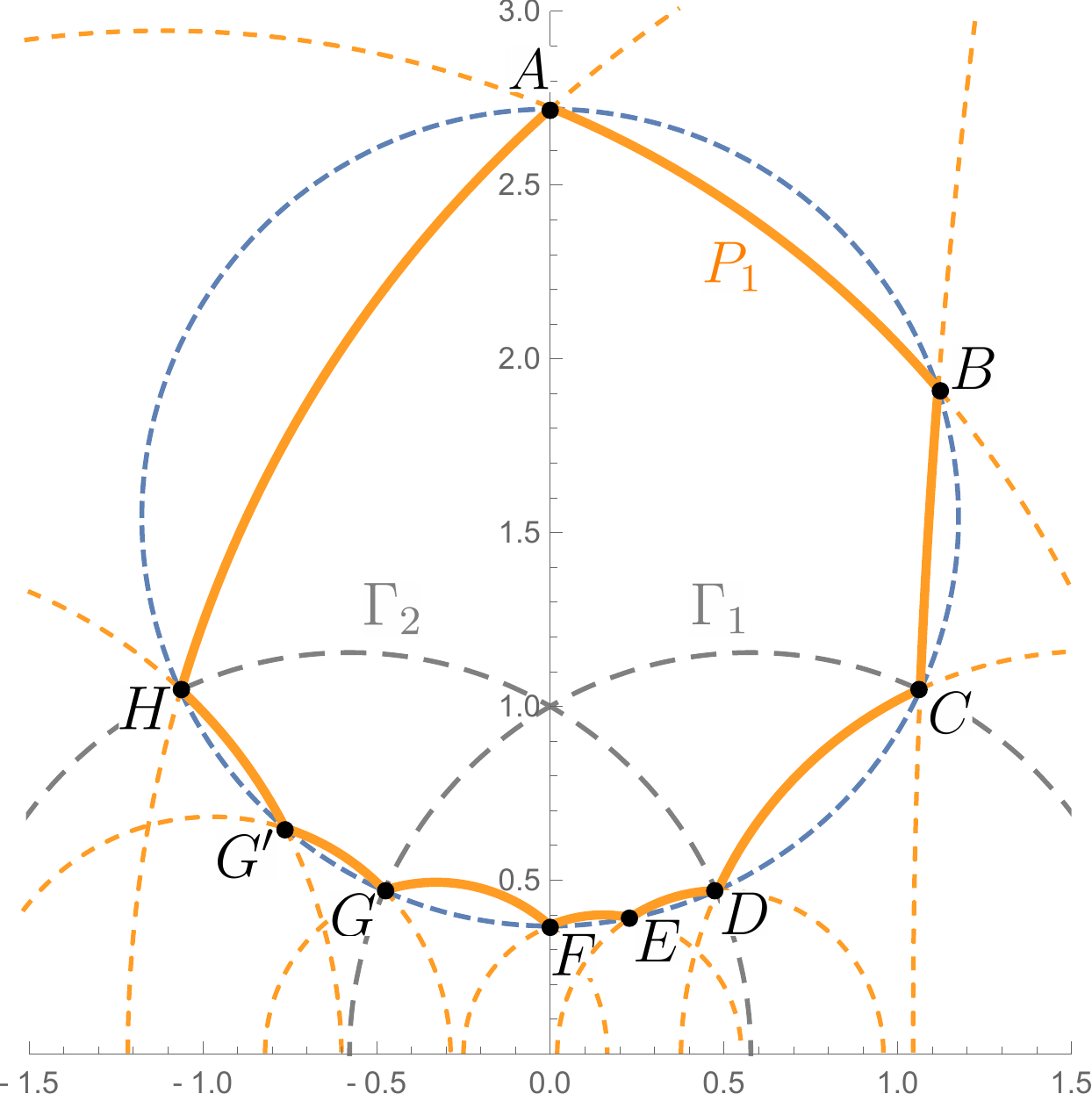}
    \includegraphics[height=2in]{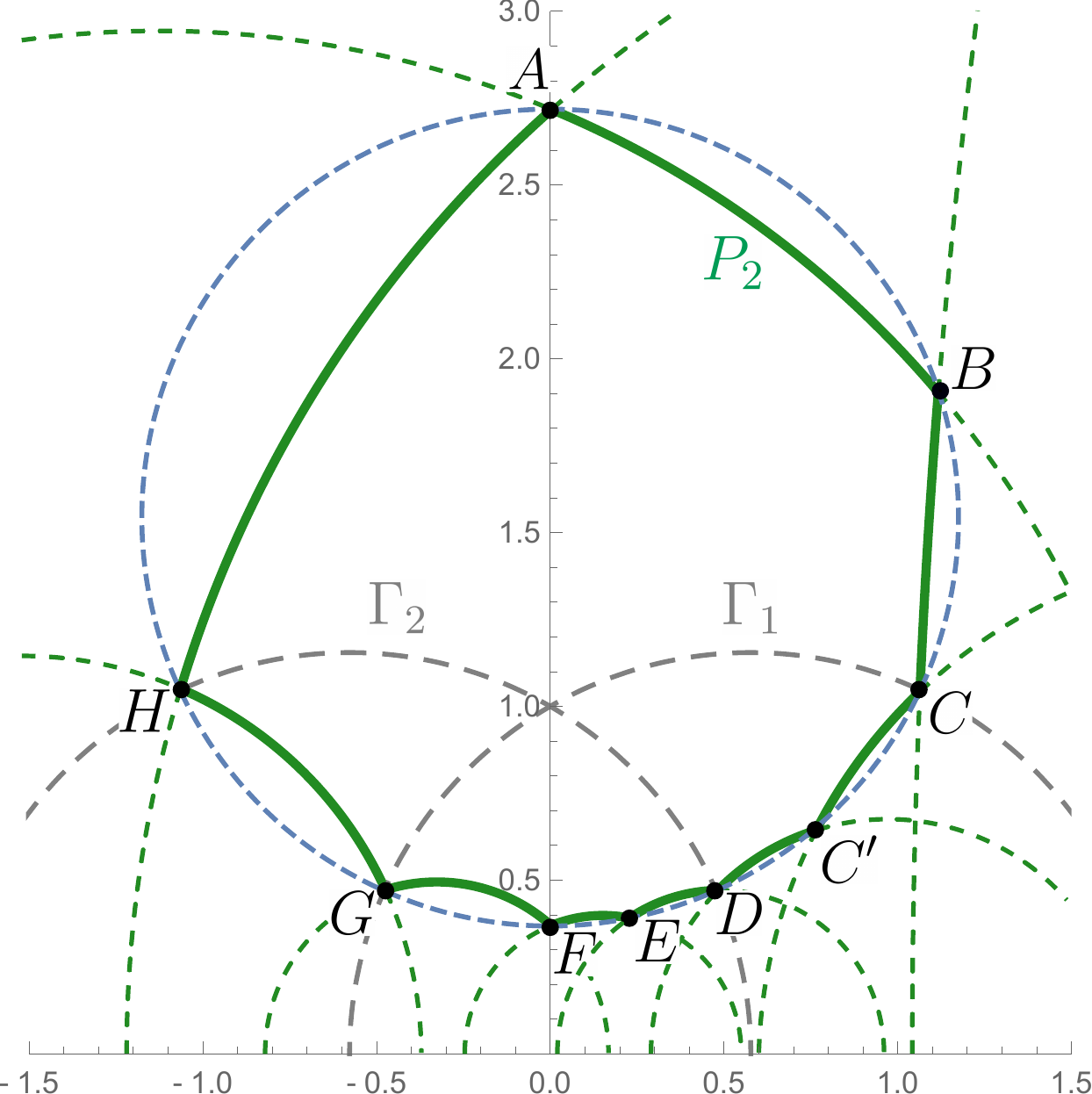}
    \caption{Two non-congruent bodies with equal lengths of projections on $H_{(0,1)}$.}
    \label{ears}
\end{figure}

The lengths of the projections of $P_1$ and $P_2$  on each geodesic in $H_{(0,1)}$ coincide. Indeed, let $\Gamma_1 \in H_{(0,1)}$ be the geodesic passing through $C$ and $G$, and $\Gamma_2 \in H_{(0,1)}$ the one passing through $D$ and $H$. The two bodies are reflections of each other in the region between $\Gamma_1$ and $\Gamma_2$, and identical outside this region. Moreover, the parts of $P_1$ or $P_2$ that lie between $\Gamma_1$ and $\Gamma_2$ do not affect any projections of the bodies on geodesics lying outside of this region, while their projections on geodesics from $H_{(0,1)}$ that lie inside this region have equal lengths.
\end{ex}

Gardner's construction provides a counterexample where both bodies (with equal lengths of projections) are polygons. One may also construct pairs of smooth bodies with the same property, as seen in the following example.

\begin{ex}\label{perturb-proj} Using Lemma \ref{HB}, we can construct a (non-symmetric) body whose projections on $H_0$ have constant length. In the Poincar\'e disc, given the unit ball $B$ centered at $O$, and a  smooth, odd function $f:S^1\to\R$, for each line $L_u$ through $O$ in the direction $u \in S^1$  we translate the symmetric segment $L_u \cap B$ along $L_u$ by $\epsilon f(u)$ units obtaining a segment $I_u$. Let $\Gamma^1_u,\Gamma^2_u$ be the two geodesics perpendicular to $L_u$ that pass through each of the two endpoints of the segment $I_u$, and denote by $S_u$ the region between $\Gamma^1_u$ and $\Gamma^2_u$ (which we will call a ``slab"). Then, we have that  $P_{L_u}(S_u)=I_u$, and we define $K$ to be the intersection of these slabs:
\[
K = \bigcap_{u\in S^1} S_u.
\]
If $\epsilon$ is small enough, by Lemma \ref{HB} the projections of $K$ on $L_u$ are equal to $I_u$. Therefore, $K$ has projections of constant length on $H_0$ and it is not a disc. 

\bigskip   
   
\noindent We remark that, similarly, the body $\widetilde{K}$ whose radial function is $\rho_{\widetilde{K}}(u)=c+\epsilon f(u)$, for some constant $c<1$ is a body with sections of constant length on $H_0$ which is not a disc. 
\end{ex}
   
As we mentioned in Section \ref{defs}, bodies of constant width are characterized by the fact that all their normal geodesics are double normals. In the next Lemma we observe that  on a double normal geodesic of $K$, projections and sections of $K$ coincide.
   
\begin{lem} 
   \label{binorm} 
   Let $K$ be a convex body and let $\Gamma$ be a geodesic intersecting $\partial K$ at two points. Then $\Gamma$ is a double normal geodesic for $\partial K$ if and only if 
   $P_\Gamma(K)=K\cap \Gamma$.
    \end{lem}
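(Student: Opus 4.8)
The plan is to prove the set equality by splitting into the two implications and exploiting the \emph{nearest-point} description of orthogonal projection in $\mathbb{H}^2$. Write $K\cap\Gamma=[a,b]$ (a geodesic segment, by convexity), and for a point $x\in\Gamma$ let $\Gamma_x:=n_\Gamma(x)$ denote the perpendicular to $\Gamma$ at $x$. The single fact I would isolate first is that the family $\{\Gamma_x:x\in\Gamma\}$ foliates the plane: any two distinct perpendiculars to $\Gamma$ are ultraparallel (disjoint), so each point of $\mathbb{H}^2$ lies on exactly one $\Gamma_x$, and the \emph{foot map} $f$ sending a point $z$ to the unique $x$ with $z\in\Gamma_x$ is continuous with level sets the $\Gamma_x$. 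Consequently $\Gamma_a$ separates the plane into two half-planes, consisting of the points whose foot lies on one or the other side of $a$, and the ``slab'' bounded by $\Gamma_a$ and $\Gamma_b$ is exactly $f^{-1}([a,b])$. By definition $P_\Gamma(K)=f(K)$.

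One inclusion is free and holds for any $\Gamma$ meeting $K$: if $x\in K\cap\Gamma$ then $x\in K\cap\Gamma_x$, so $x\in P_\Gamma(K)$; thus $K\cap\Gamma\subseteq P_\Gamma(K)$ always. It therefore remains to characterize when the reverse inclusion holds.

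\textbf{($\Rightarrow$)} Suppose $\Gamma$ is a double normal, so $\Gamma$ meets $\partial K$ perpendicularly at $a$ and $b$. Since $\Gamma_a$ is the unique perpendicular to $\Gamma$ through $a$, smoothness forces the supporting geodesics of $K$ at $a$ and $b$ to be $\Gamma_a$ and $\Gamma_b$ respectively. Hence $K$ lies on the $b$-side of $\Gamma_a$ and on the $a$-side of $\Gamma_b$, i.e. $K$ is contained in the slab $f^{-1}([a,b])$. Therefore $P_\Gamma(K)=f(K)\subseteq[a,b]=K\cap\Gamma$, and together with the free inclusion this gives equality. \textbf{($\Leftarrow$)} Suppose $P_\Gamma(K)=K\cap\Gamma=[a,b]$. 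Then every $w\in K$ satisfies $f(w)\in[a,b]$, which by the foliation means $K$ lies entirely on one closed side of $\Gamma_a$; thus $\Gamma_a$ is a supporting geodesic of $K$. Since $a\in\Gamma\cap\partial K$ and $a\in\Gamma_a$, the point $a$ is a contact point of this supporting geodesic. By smoothness of $\partial K$ the supporting geodesic at $a$ is unique, so it equals $\Gamma_a$; as $\Gamma_a\perp\Gamma$, this says $\Gamma=n_K(a)$, i.e. $\Gamma$ meets $\partial K$ perpendicularly at $a$. The identical argument at $b$ (using $\Gamma_b$) gives perpendicularity at $b$, so $\Gamma$ is a double normal.

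The step I expect to be the main obstacle is the foliation/slab claim: making rigorous that ``$K$ lies on one side of the perpendicular $\Gamma_a$'' is equivalent to ``all feet $f(K)$ lie on one side of $a$.'' This rests on the hyperbolic fact that two distinct perpendiculars to $\Gamma$ never meet, so that nearest-point projection onto $\Gamma$ is well defined and monotone along $\Gamma$. Once this is in hand, both implications reduce to the elementary observation that an extreme foot forces the corresponding perpendicular to support $K$, and smoothness (with no geodesic segments in $\partial K$) pins the contact point down to the endpoint of the chord.
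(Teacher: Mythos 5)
Your proof is correct and follows essentially the same route as the paper: both implications come down to the observation that $\Gamma$ is normal to $\partial K$ at an endpoint of the chord if and only if the perpendicular to $\Gamma$ at that endpoint supports $K$, so that $K$ sits inside the slab exactly when $\Gamma$ is a double normal. The paper phrases the converse contrapositively (if $\Gamma$ is not normal at $x$, then $x$ is interior to $P_\Gamma(K)$ but a boundary point of $K\cap\Gamma$), while you argue it directly via the foot map, but the geometric content is identical.
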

 
\begin{figure}[h]
    \centering
    \includegraphics[height=1.25in]{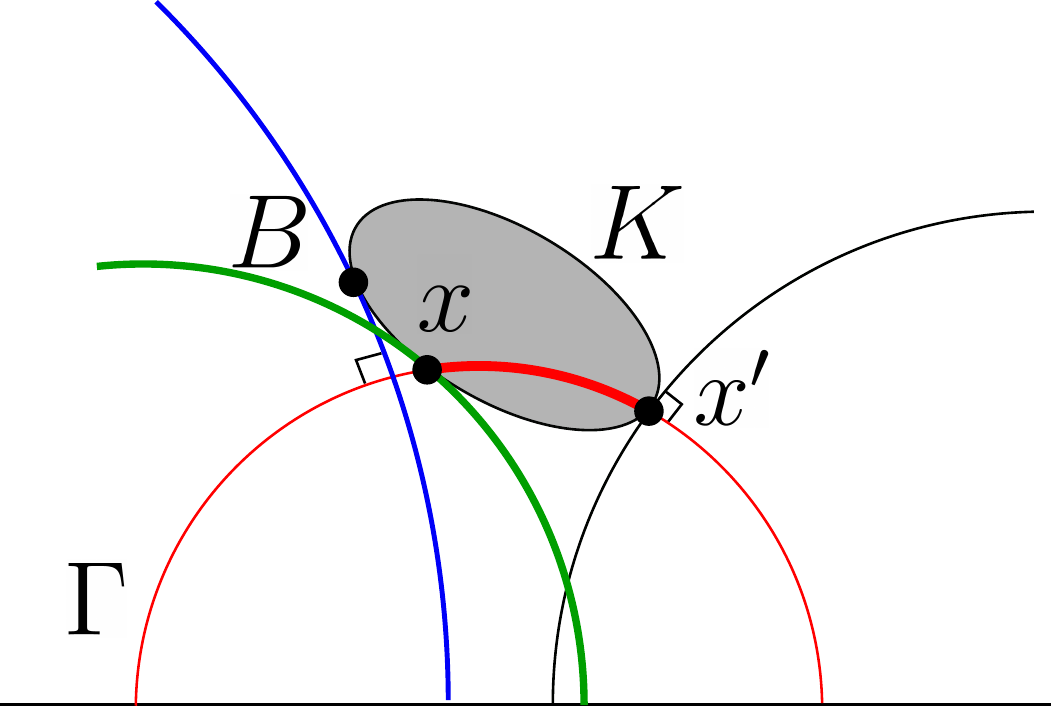}
    \caption{$K \cap \Gamma \subsetneq P_{\Gamma}(K)$.}
    \label{proj_sec_backward2}
\end{figure}

  \begin{proof}
  Denote by $x,x'$ the points of intersection of $\Gamma$ and $\partial K$. If $\Gamma$ is a double normal, the geodesics perpendicular to $\Gamma$ at $x$ and at $x'$ support $K$, and hence $P_\Gamma(K) = [x,x']=K\cap \Gamma$. Conversely, if $\Gamma$ is not normal to $\partial K$ at $x$, then $K$ is not contained in any of the two half-planes with boundary perpendicular to $\Gamma$ at $x$ (see Figure \ref{proj_sec_backward2}). Thus $x$ is an interior point of the segment $P_\Gamma(K)$, and clearly a boundary point of $[x,x']=K\cap \Gamma$, so we get $ K\cap \Gamma \subsetneq P_\Gamma(K) $.
  \end{proof}

\begin{rem}\label{rem-diam-is-width}
Every $C^1$-smooth convex body $K$ has at least one
double normal. Indeed, the diameter of $K$, being the
maximal distance between two points in $K$, is
attained on a segment which is perpendicular to
$\partial K$ at both ends. In fact, the next lemma states that, using our definition of width, the diameter coincides with the maximal width of $K$.
\end{rem}

\begin{lem}\label{diameterwidth}
Let $K$ be a $C^1$-smooth convex body. Then
\[
\max_{x\in \partial K} \left\{w_K(x)\right\}
=
\diam(K),
\]
where $\diam(K) := \max_{x,y\in \partial K} \left\{\big|[x,y]\big|\right\}$
\end{lem}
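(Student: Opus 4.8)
The plan is to prove the two inequalities separately, using the characterization (recorded just before the statement) that the width $w_K(x)$ equals the length of the projection $P_{n_K(x)}(K)$, together with the fact from Remark~\ref{rem-diam-is-width} that the diameter is realized on a double normal.

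For the inequality $\max_{x\in\partial K} w_K(x)\le \diam(K)$, fix any $x\in\partial K$ and recall that $w_K(x)=\big|[x,\phi(x)]\big|$, where $\phi(x)$ is the intersection of $n_K(x)$ with the opposite supporting geodesic $\Gamma_2$, and $\Gamma_2\perp n_K(x)$ at $\phi(x)$. Choose a contact point $y\in\Gamma_2\cap\partial K$. Since the segment $[x,\phi(x)]$ lies on $n_K(x)$ while $[\phi(x),y]$ lies on $\Gamma_2$, these two segments meet orthogonally at $\phi(x)$, so $x,\phi(x),y$ form a hyperbolic right triangle with the right angle at $\phi(x)$. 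The hyperbolic Pythagorean theorem then gives $\cosh\big|[x,y]\big|=\cosh\big|[x,\phi(x)]\big|\,\cosh\big|[\phi(x),y]\big|\ge \cosh\big|[x,\phi(x)]\big|$, whence $w_K(x)=\big|[x,\phi(x)]\big|\le \big|[x,y]\big|\le\diam(K)$. Taking the maximum over $x\in\partial K$ yields the first inequality.

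For the reverse inequality, let $p,q\in\partial K$ realize the diameter, so $\big|[p,q]\big|=\diam(K)$. By Remark~\ref{rem-diam-is-width} the segment $[p,q]$ is a double normal, hence $n_K(p)$ is precisely the geodesic through $p$ and $q$, and $\partial K$ is perpendicular to $[p,q]$ at $q$; that is, the supporting geodesic at $q$ is orthogonal to $n_K(p)$. Since this supporting geodesic places all of $K$ on one side, the projection $P_{n_K(p)}(K)$ cannot extend beyond $q$, so $\phi(p)=q$ and therefore $w_K(p)=\big|[p,q]\big|=\diam(K)$. This shows $\max_{x\in\partial K} w_K(x)\ge w_K(p)=\diam(K)$, and combining the two inequalities completes the argument.

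I expect the main obstacle to be bookkeeping around the width construction rather than any deep estimate: I must make sure that $\Gamma_2$ and $\phi(x)$ are the objects furnished by the projection description of width (so that $\Gamma_2\perp n_K(x)$ at $\phi(x)$ and $y\in\Gamma_2$ guarantees a genuine right angle at $\phi(x)$), and that in the lower-bound step the ``opposite supporting geodesic orthogonal to $n_K(p)$'' is exactly the tangent geodesic at $q$, forcing $\phi(p)=q$. Both hinge on the $C^1$-smoothness of $\partial K$, which makes supporting geodesics unique and legitimizes identifying width with projection length; once these identifications are in place, the only quantitative input is the one-line application of the hyperbolic Pythagorean relation.
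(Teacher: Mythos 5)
Your proposal is correct and follows essentially the same route as the paper: the upper bound via the right triangle $x\,\phi(x)\,y$ with the right angle at $\phi(x)$ (the paper simply says the hypotenuse is the longest edge, where you spell out the hyperbolic Pythagorean relation), and the lower bound via Remark~\ref{rem-diam-is-width} identifying the diameter segment as a double normal so that $w_K(p)=\diam(K)$. The only cosmetic difference is that you bound $w_K(x)$ for every $x$ rather than just at the maximizer, which changes nothing.
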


\begin{proof} Denoting $W=
\max_{x\in \partial K} \left\{w_K(x)\right\}$, we get by Remark \ref{rem-diam-is-width} that
$Diam(K)\le W$. On the other hand, let
$x \in \partial K$ be the maximum point of
$w_K:\partial K\to \R^+$, that is,
$W=w_K(x)=|[x,\phi(x)]|$. Let $\Gamma_2$ be the
supporting geodesic of $K$ passing through $\phi(x)$,
and let $P\in \partial K$ be its point of tangency
to $K$. In the (possibly degenerate) right triangle
$\Delta x \phi(x) P$, the hypotenuse $xP$ is the
longest edge. Hence, 
\[
    W = |[x,\phi(x)]| \le |[x,P]| \le Diam(K).
\]
\end{proof}

\begin{lem}
\label{normfield}
The normal field to a $C^1$-smooth convex body $K$  covers the interior of $K$.
\end{lem}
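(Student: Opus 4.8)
The plan is to prove the statement pointwise: I will show that every $p \in \inte(K)$ lies on at least one normal geodesic $n_K(x)$ with $x \in \partial K$. The natural candidate for such an $x$ is a boundary point closest to $p$. Intuitively, the geodesic segment realizing the minimal distance from $p$ to $\partial K$ ought to meet $\partial K$ orthogonally, and then $p$ lies on the corresponding normal geodesic. So the whole argument reduces to locating this closest point and establishing the perpendicularity.

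First I would fix $p \in \inte(K)$ and set $r = \min_{y \in \partial K} \big|[p,y]\big|$. The minimum is attained at some $x \in \partial K$, since $\partial K$ is compact and $y \mapsto \big|[p,y]\big|$ is continuous, and $r > 0$ because $p$ is an interior point. Next I would observe that the geodesic ball $B(p,r)$ is contained in $K$: if some $q$ with $\big|[p,q]\big| < r$ lay outside $K$, then the segment $[p,q]$, which starts at the interior point $p$ and ends outside $K$, would cross $\partial K$ at a point $z$; but $z$ lies on $[p,q]$ between $p$ and $q$, so $\big|[p,z]\big| \le \big|[p,q]\big| < r$, contradicting the minimality of $r$. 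Hence $B(p,r) \subseteq K$, and $x$ lies on both boundaries, $x \in \partial B(p,r) \cap \partial K$.

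The key step is the perpendicularity at $x$. Here I would exploit the models introduced in Section \ref{defs}: applying a hyperbolic isometry, I may assume $p$ is the center $O$ of the Poincar\'e disc. Then $B(p,r)$ is a Euclidean disc centered at $O$, and the radius $[O,x]$ is a Euclidean segment along a diameter, which is perpendicular to the tangent of the circle $\partial B(p,r)$ at $x$ (Euclidean perpendicularity agreeing with the hyperbolic one by conformality). Now let $\Gamma$ be the supporting geodesic of $K$ at $x$, unique by $C^1$-smoothness. Since $B(p,r) \subseteq K$ and $\Gamma$ supports $K$, the ball $B(p,r)$ lies on one side of $\Gamma$ while $x \in \Gamma$; thus $\Gamma$ is tangent to the smooth convex curve $\partial B(p,r)$ at $x$ and therefore shares its tangent direction there. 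Consequently $\Gamma \perp [O,x]$, which is precisely the statement that $[O,x]$ lies along the normal direction to $\partial K$ at $x$, i.e. $p \in n_K(x) \in \mathcal{N}_K$. As $p$ was arbitrary, the normal field covers $\inte(K)$.

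I expect the main obstacle to be making the tangency/perpendicularity step fully rigorous under only $C^1$ regularity, namely the claim that a nested pair of convex curves sharing a boundary point must share their tangent there, forcing the common tangent to be orthogonal to the radius. An equivalent route avoiding the geometric tangency statement is a first-variation argument: parametrize $\partial K$ by a $C^1$ curve $x(s)$ and note that at a minimum of $s \mapsto \big|[p,x(s)]\big|$ the derivative $\langle x'(s), T(s)\rangle$ must vanish, where $T(s)$ is the unit tangent of the connecting geodesic at its endpoint $x(s)$; this gives orthogonality directly. I would favor the ball-tangency formulation, since it stays entirely within the elementary hyperbolic geometry already set up, but I would keep the variational version in reserve in case the $C^1$ tangency argument needs to be spelled out.
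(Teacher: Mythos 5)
Your proposal is correct and follows essentially the same route as the paper: both arguments take the boundary point nearest to a given interior point and show the minimizing segment meets $\partial K$ perpendicularly, so the interior point lies on a normal geodesic. The paper justifies the perpendicularity exactly by the first-variation (critical-point of the distance function) argument that you kept in reserve, while your primary inscribed-ball tangency formulation is a fine, slightly more detailed alternative for that single step.
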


\begin{proof}
Let $x$ be an interior point of $K$. If
$y\in \partial K$ is the closest boundary point to $x$, then the segment $[x,y]$ is perpendicular to $\partial K$ at $y$ (since $y$ is a critical point of the function $d:\partial K \to \R^+$ given by $d(\cdot)=|[x,\cdot]|$). Equivalently; $x\in n_K(y)$.
\end{proof}

Next we present two Lemmas of Santal\'o which are
used in the proofs of Theorems \ref{width-sym} and
\ref{Thm_cw+H0=disk}.
The first lemma states that two normal lines of a
constant width body must intersect. For the reader's
convenience we include Santal\'o's proof here.

\begin{lem}\label{Lem-Sant2-double normals-intersect}{(Santal\'o, \cite[pg. 411]{Sant})}
Let $K\subset \H^2$ be a $C^1$-smooth convex body of constant width. Then any two lines normal to $\partial K$ intersect at an interior point of $K$.
\end{lem}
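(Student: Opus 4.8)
The plan is to reduce the statement to an elementary metric fact about diametral chords. First I would record that, since $K$ has constant width $w$, every normal geodesic is a double normal, and Lemma \ref{diameterwidth} together with the constancy of the width gives $w=\diam(K)$. Consequently, if $n_1,n_2$ are two normal geodesics with $K\cap n_1=[x_1,y_1]$ and $K\cap n_2=[x_2,y_2]$ (these are the full chords, by Lemma \ref{binorm}), then both segments have length $\diam(K)$; that is, each double normal is a diameter of $K$. The whole problem thus becomes: \emph{any two diametral chords of $K$ must cross at an interior point.}

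Assume $n_1\ne n_2$ and suppose, for contradiction, that $[x_1,y_1]$ and $[x_2,y_2]$ do not meet. I would first note that the four feet are distinct: if two of them coincided, the two normals would share a $C^1$ boundary point and hence be the same geodesic. Next I would check that $x_1,y_1,x_2,y_2$ are in convex position. Since $K$ is $C^1$-smooth with no geodesic segments in its boundary it is strictly convex, so any open chord lies in $\inte K$; therefore none of the four boundary points can lie in the convex hull of the other three (it would otherwise be an interior point of $K$), which is exactly convex position.

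With the four points in convex position and the chords $[x_1,y_1]$, $[x_2,y_2]$ disjoint, they must be two opposite sides of the convex geodesic quadrilateral they span, so the two diagonals of this quadrilateral each join an endpoint of one chord to an endpoint of the other. Applying the triangle inequality in $\mathbb{H}^2$ to the two triangles cut off by the point where the diagonals meet yields the standard estimate that the sum of the lengths of the diagonals strictly exceeds the sum of the lengths of the opposite sides $[x_1,y_1]$ and $[x_2,y_2]$, i.e.\ exceeds $2\diam(K)$. Hence at least one diagonal has length greater than $\diam(K)$, contradicting the definition of the diameter. Therefore the chords cross; the crossing point lies in the relative interior of each chord (an endpoint would force a shared boundary point, hence $n_1=n_2$), and by strict convexity this point lies in $\inte K$. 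Since two distinct geodesics meet in at most one point, this is the unique intersection of $n_1$ and $n_2$.

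The only genuinely hyperbolic input is the quadrilateral inequality, and I expect that to be the point needing the most care: I must justify that a convex quadrilateral in $\mathbb{H}^2$ has intersecting diagonals and that the triangle inequality indeed yields ``sum of diagonals $>$ sum of opposite sides'' in the hyperbolic metric. Both are standard, but unlike the Euclidean case they should be stated for geodesic segments rather than straight line segments, so I would phrase the estimate purely in terms of the distance function, which keeps the argument model-independent.
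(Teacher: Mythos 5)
Your proof is correct and follows essentially the same route as the paper's: both reduce to the fact that each double normal is a diametral chord of length $w=\diam(K)$, form the geodesic quadrilateral on the four endpoints with the two normal chords as opposite sides, and apply the triangle inequality at the intersection point of the diagonals to get the contradiction $2\diam(K)<2\diam(K)$. The extra care you take with the distinctness and convex position of the four feet is a detail the paper disposes of only via its figure and the phrase ``up to relabeling.''
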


\begin{figure}[h]
    \centering
    \includegraphics[height=1in]{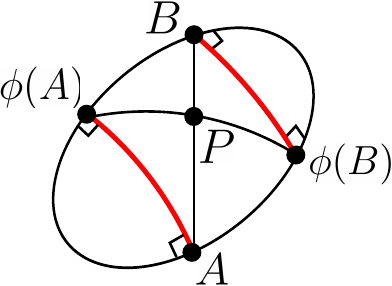}
    \caption{Lemma \ref{Lem-Sant2-double normals-intersect} (Santal\'o)}
    \label{sant2}
\end{figure}

\begin{proof}
Let $A, B\in \partial K$ be two distinct points. 
Suppose, for the sake of contradiction, that the
normal line segments $[A,\phi(A)],\, [B,\phi(B)]$ do
not intersect in the interior of $K$. Then the four
points $A,B,\phi(A), \phi(B)$ form a non-degenerate
quadrilateral $Q$, which (excluding the vertices) is
contained in the interior of $K$, as seen in Figure
\ref{sant2} (up to relabeling
$A\leftrightarrow\phi(A)$, or
$B\leftrightarrow\phi(B)$).
Now let $p\in K$ be the point of intersection of the
diagonals $[A,B]$ and $[\phi(A),\phi(B)]$ of $Q$.
From the triangle inequality it follows that
\[
\big|[A, \phi(A)]\big| <
\big|[A,       p]\big| +
\big|[p, \phi(A)]\big|,
\]
\[
\big|[B, \phi(B)]\big| <
\big|[B,       p]\big| +
\big|[p, \phi(B)]\big|.
\]
By Lemma \ref{diameterwidth}, the diameter of
$K$ is given by its width, so we get
\begin{eqnarray*}
2 Diam(K)
&=& w_K(A) + w_K(B) = \\
&=&\big|[A, \phi(A)]\big| + \big|[B, \phi(B)]\big|<\\
&<&\big|[A,       p]\big| + \big|[p, \phi(A)]\big|
 + \big|[B,       p]\big| + \big|[p, \phi(B)]\big|=\\
&=&\big|[A,    B]\big| + \big|[\phi(A),\phi(B)]\big|
\le 2 Diam(K).
\end{eqnarray*}
From this contradiction, we conclude that the
normal line segments $n_K(A),\,n_K(B)$ must
intersect at an interior point of $K$.
\end{proof}

We also use a second lemma of Santal\'o's, which we state without a proof. 

\begin{lem}\label{santalolemma}{(Santal\'o, \cite[pg. 406]{Sant})}
Let $K$ be a convex body with $C^1$-smooth boundary curve $\partial K$. Let $\Gamma$ be a geodesic that intersects $\partial K$ at the points $M,N$ and that is normal to $\partial K$ at both points. Then on the arc of $\partial K$ joining $M$ and $N$ there is a unique point $x_0$ such that $n_K(x_0)$ is normal to $\Gamma$ (see Figure \ref{santalo}). In fact, the angle $\alpha(x)$ which the normal geodesics to $\Gamma$ form with $\partial K$ is (strictly) increasing, with $\alpha(M)=0$ and $\alpha(N)=\pi$.
\end{lem}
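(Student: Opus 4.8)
The plan is to pass to Fermi coordinates $(s,t)$ adapted to $\Gamma$, in which $\Gamma=\{t=0\}$, the coordinate $t$ is signed distance to $\Gamma$, and the geodesics perpendicular to $\Gamma$ are exactly the vertical lines $\{s=\mathrm{const}\}$; the metric then has the form $dt^2+\cosh^2 t\,ds^2$. Since $\Gamma$ is normal to $\partial K$ at $M$ and $N$, Lemma \ref{binorm} gives $P_\Gamma(K)=K\cap\Gamma=[M,N]$, so $K$ projects onto $[M,N]$ and the arc of $\partial K$ joining $M,N$ is a graph $t=g(s)$, $s\in[s_M,s_N]$, with $g(s_M)=g(s_N)=0$ and vertical tangents at the endpoints (because $\Gamma\perp\partial K$ there, so $g'\to+\infty$ at $s_M$ and $g'\to-\infty$ at $s_N$). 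For $x=(s,g(s))$ the geodesic $n_\Gamma(x)$ is the vertical line through $x$, so the angle $\alpha(x)$ between $n_\Gamma(x)$ and $\partial K$ satisfies $\cos\alpha=g'/\sqrt{\cosh^2 g+(g')^2}$. The endpoint slopes give $\alpha(M)=0$ and $\alpha(N)=\pi$, and $\alpha$ is continuous; by the intermediate value theorem there is $x_0$ with $\alpha(x_0)=\pi/2$, i.e. $n_\Gamma(x_0)\perp\partial K$, which means $n_\Gamma(x_0)=n_K(x_0)$ is normal to $\Gamma$. This settles existence together with the boundary values of $\alpha$.

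For the monotonicity statement I would differentiate $\alpha$ along $\partial K$. Writing $\sigma$ for arclength and $\kappa_g\ge 0$ for the geodesic curvature of $\partial K$ (nonnegative by convexity), a Frenet/Gauss--Bonnet computation --- using that the vertical field $\partial_t$ is geodesic and that $\nabla_{\partial_s}\partial_t=\tanh t\,\partial_s$ --- yields the identity
\[
\frac{d\alpha}{d\sigma}=\kappa_g(x)-\sin\alpha(x)\,\tanh\big(\mathrm{dist}(x,\Gamma)\big),
\]
equivalently $d\alpha/ds=\kappa_g\sqrt{\cosh^2 g+(g')^2}-\sinh g$ in the $s$-parameter. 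Thus strict monotonicity of $\alpha$, and hence uniqueness of $x_0$, is equivalent to the strict inequality $\kappa_g(x)>\sin\alpha(x)\tanh(\mathrm{dist}(x,\Gamma))$ along the arc.

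Establishing this curvature inequality is the main obstacle. Since $\sin\alpha\le 1$ and $\tanh(\mathrm{dist})<1$, it is immediate whenever $\partial K$ has geodesic curvature $\ge 1$, so under a suitable strict-convexity hypothesis (no geodesic segments, and more precisely $\kappa_g\ge 1$) the inequality holds and $\alpha$ increases strictly from $0$ to $\pi$; without such a lower bound the term $\sin\alpha\tanh(\mathrm{dist})$ could dominate a very small $\kappa_g$, so the crux is to use convexity to control $\kappa_g$ from below by the curvature $\tanh(\mathrm{dist})$ of the equidistant curve through $x$. Concretely, at any point where $\alpha=\pi/2$ one has $g'=0$ and $\kappa_g-\tanh g=-g''/\cosh^2 g$, so each such crossing is transversal and upward precisely when $x$ is a strict local maximum of the distance to $\Gamma$. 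For the constant-width bodies to which this lemma is applied, uniqueness can also be obtained more cheaply from Lemma \ref{Lem-Sant2-double normals-intersect}: two distinct normals $n_K(x_1),n_K(x_2)$ perpendicular to $\Gamma$ would be ultraparallel (having $\Gamma$ as a common perpendicular) and hence disjoint, contradicting that any two normals of a constant-width body meet at an interior point of $K$.
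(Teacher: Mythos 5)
The paper states this lemma without proof (it is quoted from Santal\'o), so there is no in-paper argument to compare against; judged on its own, your proposal establishes the easy half and leaves the essential half open. The Fermi-coordinate setup, the graph representation $t=g(s)$ of the arc, the boundary values $\alpha(M)=0$, $\alpha(N)=\pi$, and the existence of $x_0$ by the intermediate value theorem are all fine. But the content of the lemma is the \emph{strict monotonicity} of $\alpha$, equivalently the uniqueness of $x_0$, and you explicitly do not prove it: you reduce it to the inequality $\kappa_g(x)>\sin\alpha(x)\tanh(\mathrm{dist}(x,\Gamma))$ and note that convexity only gives $\kappa_g\ge 0$. That gap is genuine and cannot be closed from the stated hypotheses alone. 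Indeed, a point $x$ with $n_K(x)$ perpendicular to $\Gamma$ is exactly a critical point of $t=\mathrm{dist}(\cdot,\Gamma)$ restricted to the arc, and along a geodesic chord ultraparallel to $\Gamma$ the function $t$ has an interior \emph{minimum} whenever the foot of the common perpendicular lies on the chord (since $\sinh t(\sigma)=\sinh t_0\cosh\sigma$ there). So if one takes a disc centered on $\Gamma$, slices off a cap with such a chord, and rounds the corners, one obtains a $C^1$ convex body for which $\Gamma$ is still a double normal but $t$ has at least three critical points on one arc, i.e.\ at least three admissible points $x_0$. The statement therefore requires more than bare convexity (Santal\'o's own hypotheses, a curvature bound such as your $\kappa_g\ge 1$, or constant width), and no argument proceeding from $\kappa_g\ge 0$ alone can succeed.

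The saving grace is your closing observation, which should be promoted from an aside to the actual proof for the purposes of this paper: the lemma is invoked only in Proposition \ref{maxproj}, where $K$ has constant width. There, uniqueness follows in one line from Lemma \ref{Lem-Sant2-double normals-intersect}: two distinct normals of $K$ meeting $\Gamma$ perpendicularly at distinct points would admit $\Gamma$ as a common perpendicular and hence be disjoint, contradicting the fact that any two normals of a constant-width body meet at an interior point of $K$ (and two points of one arc cannot share the same perpendicular to $\Gamma$, since the arc is a graph over $[M,N]$). Combined with your existence argument, this yields everything Proposition \ref{maxproj} needs, though it proves a weaker statement than the lemma as printed.
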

\begin{figure}[h]
    \centering
    \includegraphics[height=2in]{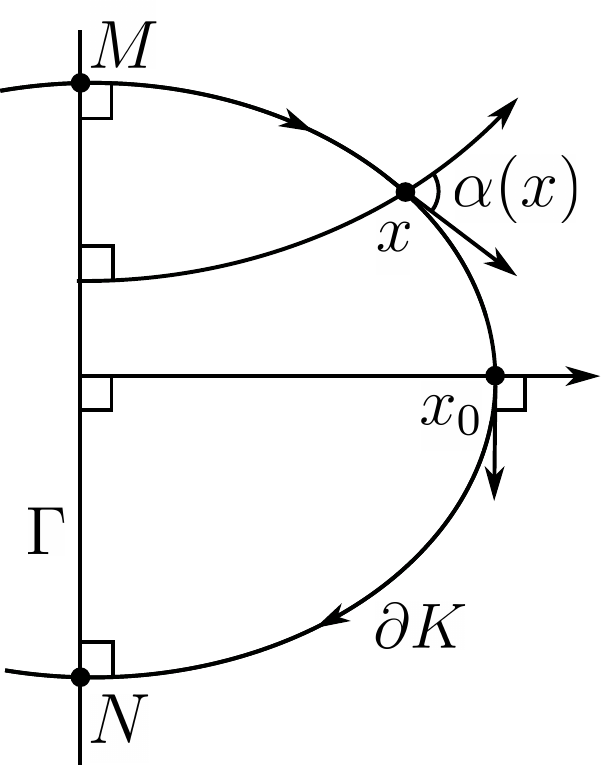}
    \caption{Lemma \ref{santalolemma} (Santal\'o)}
    \label{santalo}
\end{figure}

\section{Results on unique reconstruction from projection lengths}\label{unique}

\subsection{Characterizations of the disc}
In $\mathbb{E}^2$, a body of constant width has constant projection lengths on all lines. In $\mathbb{H}^2$ this is not the case, since there are geodesics on which the projection length is arbitrarily small. In addition, we show in the Appendix that a hyperbolic Reuleaux triangle constructed from a circle centered at $O$ has constant width, but does not have constant projection lengths on $H_0$. 

A disc centered at the origin has the following four properties:
\begin{enumerate}[$(i)$]
    \item Origin-symmetry.
    \item Constant width. 
    \item Constant projection lengths on $H_0$. 
    \item Constant sections lengths on $H_0$.
\end{enumerate}
In this section we prove that any two of these properties characterize the disc.

The characterization of the disc by $(i)$ and $(iii)$ was already observed in Remark \ref{symmetry}. Similarly, the characterization by $(i)$ and $(iv)$ follows directly from the definitions.

Our main results are the characterizations by $(i)$ and $(ii)$, and by $(ii)$ and $(iii)$. In Theorem \ref{width-sym}, we show that the disc is the only $C^1$-smooth origin-symmetric body of constant width. In Theorem \ref{Thm_cw+H0=disk} we prove that the disc is the only $C^1$-smooth body of constant width with constant projection lengths on $H_0$. The two remaining characterizations follow as Corollary \ref{coro1} and Theorem \ref{coro2}.

\begin{thm}\label{width-sym}
If $K$ is an origin-symmetric convex body of constant width $w$ and
$\partial K$ is a $C^1$-smooth curve, then $K$ is a disc.
\end{thm}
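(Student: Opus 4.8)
The plan is to show that origin-symmetry forces \emph{every} normal geodesic of $K$ to pass through the origin $O$, and then to read off from the constant width condition that every boundary point lies at hyperbolic distance $w/2$ from $O$, so that $\partial K$ is a circle centered at $O$. Throughout I write $\sigma\colon z\mapsto -z$ for the point reflection about $O$, which is an isometry of $\mathbb{H}^2$; since $K=-K$ is origin-symmetric with $O$ in its interior, $\sigma$ preserves $K$ and maps $\partial K$ to itself. Being an isometry that preserves $K$, it carries the supporting geodesic and hence the normal line at $x$ to those at $\sigma(x)=-x$; that is, $n_K(-x)=\sigma\big(n_K(x)\big)=-\,n_K(x)$ for every $x\in\partial K$.

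The key step is the claim that each $n_K(x)$ passes through $O$. By Lemma~\ref{Lem-Sant2-double normals-intersect} (Santal\'o), any two normals of a constant width body meet at an interior point; in particular $n_K(x)$ and $n_K(-x)=-n_K(x)$ intersect. On the other hand, a geodesic $g$ and its reflection $-g=\sigma(g)$ are, unless they coincide, two \emph{distinct} geodesics, each perpendicular to the geodesic $\ell$ through $O$ and the foot $q$ of the perpendicular dropped from $O$ to $g$: indeed $\sigma$ fixes $O$, preserves $\ell$, and sends $q$ to its antipode $-q$, so $-g\perp\ell$ at $-q$ while $g\perp\ell$ at $q$. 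Two distinct geodesics sharing a common perpendicular are ultraparallel and do not intersect. Since Santal\'o's lemma forces $n_K(x)$ and $-n_K(x)$ to meet, they cannot be distinct, so $n_K(x)=-n_K(x)$; being invariant under $\sigma$, whose only fixed point is $O$, the geodesic $n_K(x)$ must contain $O$. Thus every normal of $K$ passes through the origin.

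To conclude, fix $x\in\partial K$. Since $n_K(x)$ is now the geodesic through $O$ and $x$, and $K$ is origin-symmetric with $O$ in its interior, this geodesic meets $\partial K$ in exactly the two points $x$ and $-x$. As $K$ has constant width, $n_K(x)$ is a double normal, so by Lemma~\ref{binorm} its endpoint $\phi(x)$ lies on $\partial K$, forcing $\phi(x)=-x$; hence $w=w_K(x)=\big|[x,-x]\big|$. Because $O$ is the midpoint of the diameter $[x,-x]$, this yields $\big|[O,x]\big|=w/2$ for every $x\in\partial K$. Therefore $\partial K$ is the hyperbolic circle of radius $w/2$ centered at $O$, and $K$ is a disc.

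The main obstacle is the second paragraph: converting the soft statement ``these two normals must intersect'' into the rigid conclusion ``the normal passes through $O$.'' This rests on the purely hyperbolic phenomenon that a geodesic and its point-reflection about $O$ are ultraparallel whenever the geodesic misses $O$ --- a feature with no Euclidean counterpart, where a line and its reflection about a point are merely parallel and the argument would collapse. The care needed lies in verifying cleanly that $\sigma(g)$ and $g$ share the perpendicular $\ell$ through $O$, and that $\sigma$-invariance of $g$ genuinely forces $O\in g$.
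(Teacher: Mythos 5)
Your proposal is correct and follows essentially the same route as the paper: both proofs apply Santal\'o's Lemma~\ref{Lem-Sant2-double normals-intersect} to the pair of normals at $x$ and $-x$, use origin symmetry to identify $n_K(-x)$ with the point-reflection of $n_K(x)$, and conclude that every normal is a diameter through $O$ of length $w$. The only difference is one of rigor: where the paper justifies the non-intersection of $n_K(x_1)\cap K$ and $n_K(-x_1)\cap K$ by appeal to a figure, you supply the clean argument that a geodesic missing $O$ and its reflection share the common perpendicular through $O$ and are therefore ultraparallel, which is a worthwhile sharpening of the same idea.
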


\begin{figure}[h]
    \centering
    \includegraphics[height=1.8in]{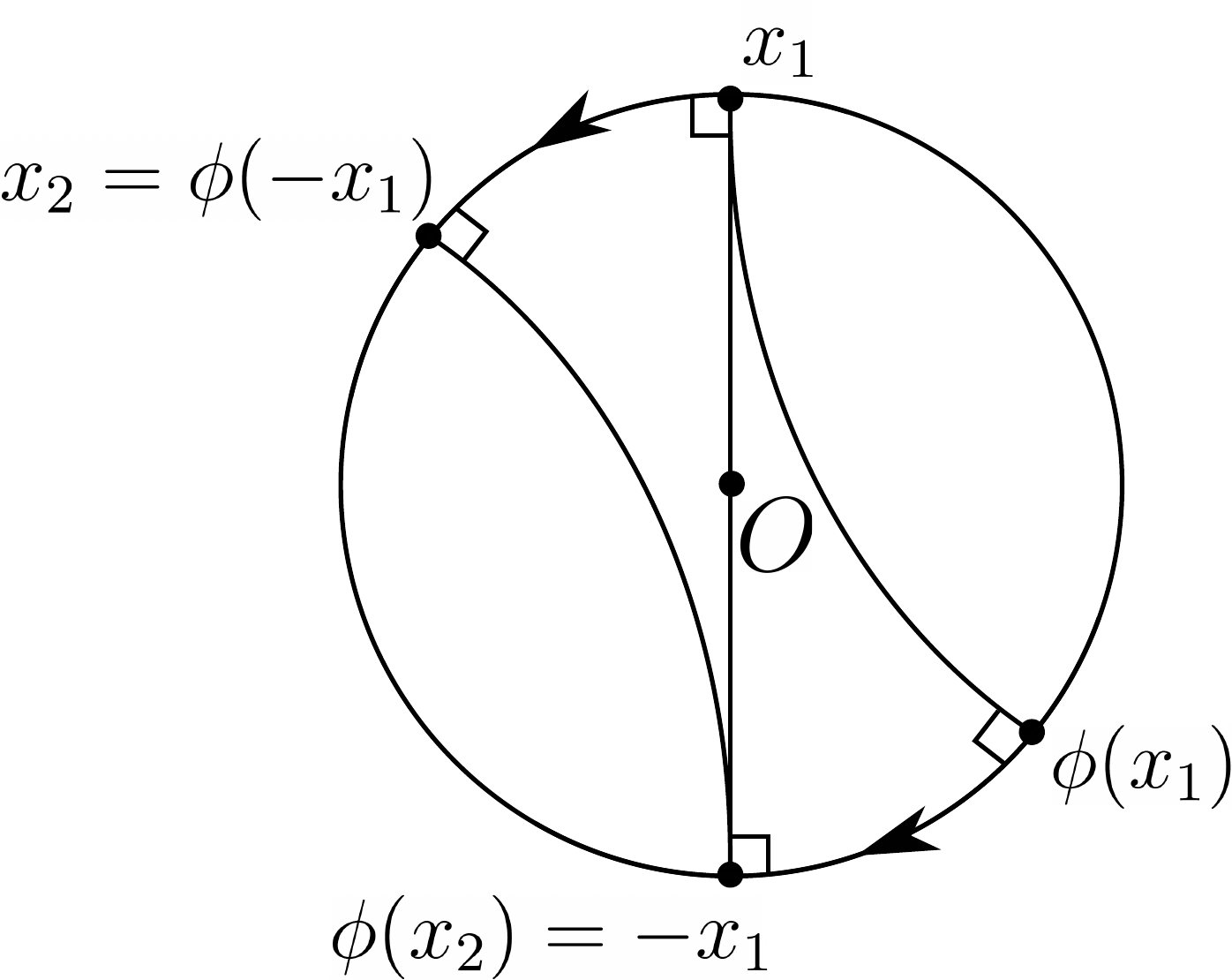}
    \caption{Proof of Theorem \ref{width-sym}.}
    \label{mon_inc}
\end{figure}

\begin{proof}
Let $x\in \partial K$, and let $n_K(x)$ be the normal geodesic to $K$ at $x$. Denote by $\phi(x)$ the other point where $n_K(x)$ intersects the boundary of $K$. Since $K$ has constant width, the geodesics  $n_K(\phi(x))$ and $n_K(x)$ are double normals and thus they coincide. 
Therefore, the function defined on $\partial K$ by $x \rightarrow \phi(x)$ is an involution ({\it i.e.,} $\phi(\phi(x))=x$). 

Suppose that for a point $x_1 \in \partial K$,  we have that $\phi(x_1) \not= -x_1$. Denote $x_2=\phi(-x_1)$. Then, by origin symmetry, the geodesic segments $n_K(x_1) \cap K $ and $n_K(-x_1) \cap K$ do not intersect, see Figure \ref{mon_inc}. But this contradicts Lemma \ref{Lem-Sant2-double normals-intersect}, and thus  
 we must have $\phi(x_1)=-x_1$. Since the point $x_1$ was arbitrary, all the normal lines to $\partial K$ are radii, and for every $x\in \partial K$,  $\big|[x,-x]\big|$ is equal to the width $w$. This means that $K$ is a disc.
\end{proof}

A version of Theorem \ref{width-sym} was proven in \cite{Jeronimo}, where the definition of width considered is different than Santal\'o's (however, both definitions coincide on the class of bodies of constant width). Our proof is also different, based on Santal\'o's Lemma \ref{Lem-Sant2-double normals-intersect} regarding the intersection of any two normals of a body of constant width.

\bigskip 

Next, we prove that the maximal projection length of a constant width body equals its width, and is attained only on the normal field of the body. This will be the main ingredient in the proof of Theorem \ref{Thm_cw+H0=disk}.

\begin{propo}\label{maxproj}
Let $K \subseteq \mathbb{H}^2$ be a convex body of constant width $w$, and let $M$ be a geodesic line.
Then
\[
|P_M(K)| \le w,
\]
and equality holds if and only if $M$ is a double normal
geodesic of $K$.
\end{propo}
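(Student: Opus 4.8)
The plan is to prove both the inequality and the equality characterization by exploiting the relationship between the projection $P_M(K)$ and double normals, together with the fact (Lemma \ref{diameterwidth}) that for a constant width body the width equals the diameter and is constant.

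Let me sketch how I would prove this. For the inequality, given an arbitrary geodesic $M$, I consider the projection $P_M(K)$, which is a segment on $M$ whose endpoints are the feet of the two supporting geodesics of $K$ perpendicular to $M$. Let $[a,b] = P_M(K)$ with $a,b \in M$, and let these perpendicular supporting geodesics touch $\partial K$ at points $x_a, x_b$ respectively. The key is to relate $|[a,b]|$ to the distance between the two parallel (both perpendicular to $M$) supporting geodesics. I would argue that $|P_M(K)|$ is at most the width of $K$ measured in the direction normal to $M$: indeed, the normal segment realizing the width from $x_a$ (say $w_K(x_a) = |[x_a,\phi(x_a)]|$) should dominate the projection length, using that in the right triangles formed by dropping perpendiculars, the hypotenuse is the longest side — the same triangle-inequality mechanism already used in Lemma \ref{diameterwidth}. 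Since $w_K(x_a) = w$ by constant width, this gives $|P_M(K)| \le w$.

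For the equality case, the ``if'' direction is immediate from Lemma \ref{binorm}: if $M$ is a double normal, then $P_M(K) = K \cap M = [x,x']$, and this segment is exactly the width segment $w_K(x) = w$. The ``only if'' direction is where the real work lies. Here I would assume $|P_M(K)| = w$ and show $M$ must be a double normal. The natural tool is Lemma \ref{santalolemma}: the two supporting geodesics perpendicular to $M$ touch $\partial K$ at points between which lies a unique point $x_0$ whose normal $n_K(x_0)$ is perpendicular to $M$, hence parallel to $M$'s perpendiculars. I expect the argument to show that unless $M$ itself is normal to $\partial K$ at the relevant points, the projection length is strictly less than $w$, because the realizing width segment would be strictly longer than the projection (strict inequality in the triangle estimate). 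Equality in the degenerate triangle forces the configuration to collapse so that $M$ meets $\partial K$ perpendicularly at both ends.

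\textbf{The main obstacle} I anticipate is making the equality analysis rigorous in the hyperbolic setting, where — unlike in $\mathbb{E}^2$ — the two supporting geodesics perpendicular to $M$ are not equidistant from $M$ along all transversals, and the ``width in the direction of $M$'' is not a single well-defined quantity. I would likely work in the upper half-plane model (as the paper signals it does for this lemma) and place $M$ conveniently, perhaps as a vertical geodesic or the imaginary axis, to compute the relevant hyperbolic distances explicitly and track exactly when the triangle-inequality estimates become equalities. The delicate point is ruling out the possibility that $|P_M(K)| = w$ is achieved by some non-normal configuration; pinning down that equality in the relevant right triangles forces both feet of the perpendicular supporting geodesics to coincide with the points where $M$ meets $\partial K$ is the crux of the argument.
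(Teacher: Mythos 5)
Your route is genuinely different from the paper's, and it can be made to work; but as written the crux you yourself flag (equality forces a double normal) is only conjectured, not proved, and the quantity you propose to compare against is slightly the wrong one. The paper instead fixes the unique binormal $L$ perpendicular to $M$, places $L$ as the $y$-axis in the upper half-plane, and sweeps the pencil $L^\perp$ of concentric half-circles: it transports the width chord $[x,y]$ of the unique perpendicular binormal $M_w$ (supplied by Lemma \ref{santalolemma}) along the two equidistant curves (hypercycles) through $x$ and $y$, obtaining on each $M_a\in L^\perp$ a segment $[x_a,y_a]$ of length exactly $w$ that strictly contains $P_{M_a}(K)$ when $a\neq w$. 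Your approach avoids hypercycles and Lemma \ref{santalolemma} altogether, which is a real simplification, but to close it you need one precise fact in place of the vague ``degenerate triangle collapses'': the two supporting geodesics $\Gamma_a=n_M(a)$, $\Gamma_b=n_M(b)$ bounding the projection are ultraparallel with common perpendicular $M$, and the distance between two such geodesics is attained \emph{uniquely} along the common perpendicular segment $[a,b]$. Then, writing $x_a,x_b$ for the tangency points, you get
\[
|P_M(K)|=\big|[a,b]\big|=\mathrm{dist}(\Gamma_a,\Gamma_b)\le \big|[x_a,x_b]\big|\le \diam(K)=w,
\]
where the last equality is Lemma \ref{diameterwidth} together with constant width. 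This is cleaner than your proposed comparison with $w_K(x_a)=|[x_a,\phi(x_a)]|$, which is awkward because $\phi(x_a)$ need not lie on $\Gamma_b$. Equality throughout forces $[x_a,x_b]$ to realize $\mathrm{dist}(\Gamma_a,\Gamma_b)$, hence by uniqueness of the common perpendicular $x_a=a$ and $x_b=b$; then $M$ is perpendicular to the supporting geodesics at two boundary points, i.e.\ $M=n_K(x_a)=n_K(x_b)$ is a double normal. With that one hyperbolic fact supplied, your argument is complete and arguably more elementary than the paper's; without it, the ``only if'' direction remains a genuine gap.
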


\begin{figure}[h]
    \centering
    \includegraphics[height=1.8in]{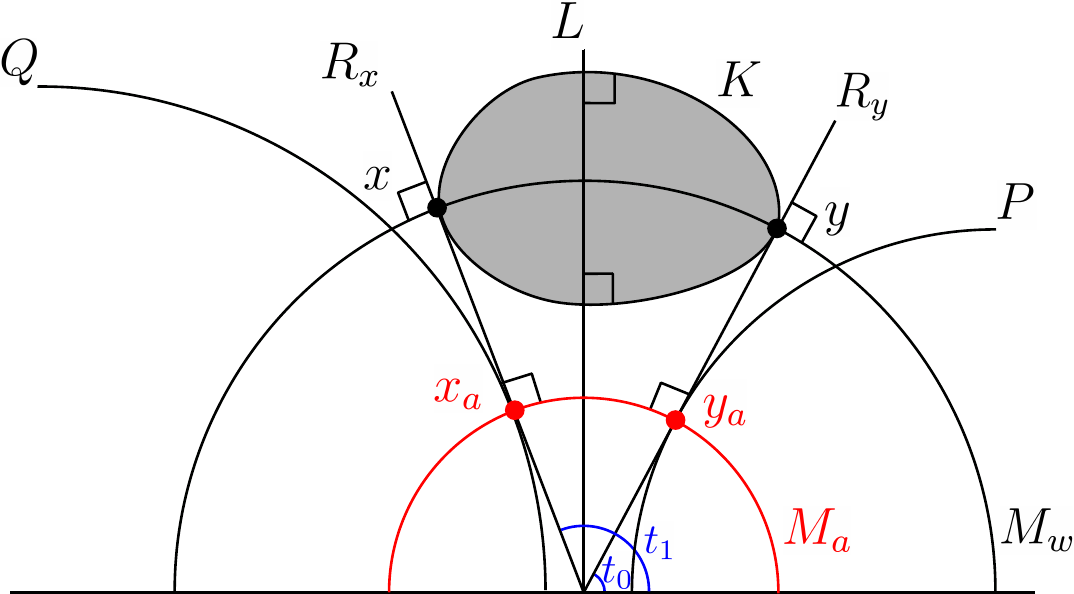}
    \caption{Proof of Proposition \ref{maxproj}.}
    \label{cone}
\end{figure}

\begin{proof}
Given a geodesic line $M$, there exists a (unique) geodesic line $L$
which is perpendicular to both $\partial K$ and $M$. We shall
therefore prove the following (only formally stronger) statement.
Given a binormal geodesic $L$ of $K$, and $M\in L^\perp$, we have
\[
|P_M(K)| \le w,
\]
and equality holds if and only if $M$ is the unique binormal of $K$
which is perpendicular to $L$.

We consider $K$ in the upper half-plane model of $\mathbb{H}^2$. Let
$L$ be a normal geodesic to $K$. By applying an isometry, we may
assume that $L$ is the $y$-axis, so that the family $L^\perp$ of
geodesics perpendicular to $L$ consists of concentric half circles
(see Figure \ref{cone}). By Santal\'o's Lemma \ref{santalolemma},
only one line of $L^\perp$ (which we denote by $M_w$) intersects
$\partial K$ perpendicularly. Since $K$ has constant width, the
projection of $K$ on the normal line $M_w$ has length $w$.

Let $x,y$ be the points of intersection between $M_w$ and
$\partial K$, and let $R_x$, $R_y$ be the rays emanating from the
base point of $L$ and passing through $x,y$, respectively. Each such
ray has the property of being equidistant to $L$, {\it i.e.} it
consists of points at some fixed distance from $L$. Equivalently,
they are orthogonal to every line of $L^\perp$. Therefore every
geodesic $M_a\in L^\perp$, intersects $R_x,R_y$ orthogonally at the
points $x_a,y_a$, and we have $|[x_a,y_a]| = |[x,y]| = w$.

Since $\{x\} = K\cap R_x$, the geodesics $Q,P$ perpendicular to $M_a$
at $x_a,y_a$ do not intersect $K$ for $a\neq w$, see Figure
\ref{cone}. Thus, the projection of $K$ on $M_a$ is strictly
contained in the segment $[x_a,y_a]$, and we have
\[
P_{M_a}(K) <
\big|[x_a,y_a]\big| =
\big|[x,y]\big| =
P_{M_w}(K) =
w.
\]
\end{proof}

\begin{rem}\label{rem_min-proj-arbitrary}
In contrast to the maximal projection being obtained on a diameter
(see Lemma \ref{diameterwidth}), a minimal projection does not exist.
Indeed, for any convex body $K\subset\H^2$, an arbitrarily small
projection can be found. It can be shown that if a line $L$
intersects $K$, and $x\in L$ is a point sufficiently far from $K$,
then the projection of $K$ onto the line $M$ perpendicular to $L$ at
$x$ can be made arbitrarily small. Thus the spectrum of projection
lengths is the interval $(0,Diam(K)]$.
\end{rem}

\begin{thm}
\label{Thm_cw+H0=disk}
Let $K$ be a body of constant width $w$, containing the origin in its interior, and such that $\partial K$ is a $C^1$-smooth curve. If all projection lengths of $K$ on $H_0$ are constant, then $K$ is a disc.
\end{thm}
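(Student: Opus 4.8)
The plan is to first determine the common value of the projection lengths on $H_0$, then to upgrade the constant-projection hypothesis to the statement that every normal of $K$ is a radius through $O$, and finally to read off from a geodesic polar computation that $\partial K$ is a circle centered at $O$. For the first step I would produce a double normal through the origin: since $O$ lies in the interior of $K$, Lemma \ref{normfield} yields a point $x_0\in\partial K$ with $O\in n_K(x_0)$, and because $K$ has constant width, $n_K(x_0)$ is a double normal lying in $H_0$. By Proposition \ref{maxproj} its projection length equals exactly $w$. Since by hypothesis all projections on $H_0$ share one common value, that value must be $w$, whence $|P_M(K)|=w$ for every $M\in H_0$.

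Next I would invoke the equality case of Proposition \ref{maxproj}, according to which $|P_M(K)|=w$ forces $M$ to be a double normal. Thus every geodesic through $O$ is a double normal of $K$. From this I would deduce that every normal of $K$ is radial: given $x\in\partial K$, the geodesic $\gamma$ joining $O$ to $x$ lies in $H_0$, so it is a double normal and in particular meets $\partial K$ orthogonally at each of its two boundary points; since $x$ is one of these points and the $C^1$ smoothness makes the normal line at $x$ unique, we conclude $\gamma=n_K(x)$. In other words, at every boundary point the normal geodesic is precisely the radial geodesic from $O$.

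To finish, I would pass to geodesic polar coordinates $(\rho,\theta)$ centered at $O$, in which the metric is $d\rho^2+\sinh^2\rho\,d\theta^2$ and the radial geodesics through $O$ are the curves $\theta=\mathrm{const}$. Writing $\partial K$ as $\rho=\rho(\theta)$ (single-valued and $C^1$, since $K$ is convex with $O$ in its interior), the tangent vector $\rho'(\theta)\,\partial_\rho+\partial_\theta$ is orthogonal to the radial direction $\partial_\rho$ if and only if $\rho'(\theta)=0$. The conclusion of the previous paragraph is exactly this orthogonality for all $\theta$, so $\rho$ is constant and $K$ is a disc centered at $O$.

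The step needing the most care is this last one. The constant-width relation by itself only yields $\rho(\theta)+\rho(\theta+\pi)=w$ along each diameter, which is far from forcing a circle; the rigidity comes entirely from the perpendicularity of every radial geodesic to $\partial K$, so it is essential to carry the full orthogonality condition (rather than merely the width) into the polar computation. One should also verify the minor bookkeeping that $O$ lies strictly between the two boundary intersections on each diameter and that the radial function is genuinely single-valued, both of which follow from convexity together with $O$ being an interior point of $K$.
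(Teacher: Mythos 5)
Your proposal is correct and follows essentially the same route as the paper: produce a double normal through the origin via Lemma \ref{normfield}, identify the common projection length as $w$, and apply the equality case of Proposition \ref{maxproj} to conclude that every line through the origin is a double normal, i.e. $\mathcal{N}_K = H_0$. Your polar-coordinate computation merely fills in the final implication (``$\mathcal{N}_K = H_0$ implies $K$ is a disc'') that the paper leaves implicit.
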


\begin{proof}
By Lemma \ref{normfield}, the normal field of $K$ covers the interior
of $K$. Therefore there is a normal geodesic to $\partial K$ which
belongs to $H_0$. On this geodesic, the width and the length of the
projection coincide. Hence, all projections of $K$ on $H_0$ have
length $w$. By Proposition \ref{maxproj}, this implies that each line
in $H_0$ is a normal line, {\it i.e.}, $\mathcal{N}_K = H_0$, which
implies that $K$ is a disc.

\end{proof}

\begin{coro}\label{coro1}
Let $K$ be a body of constant width $w$, containing the origin in its interior, and such that $\partial K$ is a $C^1$-smooth curve. If all sections of $K$ on $H_0$ have  constant length, then $K$ is a disc.
\end{coro}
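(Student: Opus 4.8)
The plan is to reduce Corollary~\ref{coro1} to the already-proven Theorem~\ref{Thm_cw+H0=disk} by observing that, for a body of constant width, the hypothesis on sections is equivalent to the hypothesis on projections. The crucial bridge is Lemma~\ref{binorm}: on a \emph{double normal} geodesic of $K$, the projection and the section of $K$ coincide. So the entire task is to show that the constant-section hypothesis forces all the relevant projections to be constant as well, and then invoke Theorem~\ref{Thm_cw+H0=disk} verbatim.

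First I would use Lemma~\ref{normfield}: since $K$ is $C^1$-smooth, its normal field covers the interior of $K$, and in particular the origin $O$ (which lies in the interior by hypothesis) is covered by some normal geodesic $\Gamma_0 \in \mathcal{N}_K$. This geodesic passes through the origin, so $\Gamma_0 \in H_0$. Because $K$ has constant width, every normal geodesic is a double normal (as recalled in Section~\ref{defs}), so $\Gamma_0$ is a double normal, and Lemma~\ref{binorm} gives $P_{\Gamma_0}(K) = K \cap \Gamma_0$. On this geodesic the section length therefore equals the projection length, which in turn equals the width $w$. Since by hypothesis \emph{all} sections on $H_0$ have the same (constant) length, that common length must be $w$; hence every section of $K$ on a geodesic through the origin has length exactly $w$.

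Next I would argue that a section of length $w$ on a geodesic through $O$ can only occur on a double normal. For an arbitrary line $M \in H_0$, Proposition~\ref{maxproj} bounds the \emph{projection} by $|P_M(K)| \le w$, with equality exactly when $M$ is a double normal. Since $K \cap M \subseteq P_M(K)$ always holds as segments on $M$, we get $|K\cap M| \le |P_M(K)| \le w$; but we have just shown the section length equals $w$, so both inequalities are forced to be equalities. Equality in Proposition~\ref{maxproj} then tells us $M$ is a double normal of $K$. As $M\in H_0$ was arbitrary, every geodesic through the origin is a double normal, so $H_0 \subseteq \mathcal{N}_K$, and on each of these lines the section and projection coincide and equal $w$. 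In particular all projections of $K$ on $H_0$ are constant, and Theorem~\ref{Thm_cw+H0=disk} applies directly to conclude that $K$ is a disc.

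I expect the main (and only genuine) obstacle to be the careful chaining of the inequalities $|K\cap M| \le |P_M(K)| \le w$ to pin down equality, which is what upgrades ``constant section length'' to ``double normal.'' One should check that the inclusion $K\cap M \subseteq P_M(K)$ as subsets of the geodesic $M$ is legitimate in the hyperbolic setting — it follows because any point of $K\cap M$ lies on $M$ and meets $K$, hence satisfies the defining condition of $P_M(K)$ — and that the constant in the section hypothesis is indeed identified with $w$ rather than merely with some a priori unknown constant; this identification is exactly what the normal geodesic $\Gamma_0$ through the origin furnishes. Once these points are settled, the argument is a short deduction with no further computation.
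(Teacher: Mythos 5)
Your proposal is correct and follows essentially the same route as the paper: locate a double normal geodesic through the origin via the normal field, use Lemma \ref{binorm} to identify the constant section length with the width $w$, and then apply Proposition \ref{maxproj} to force every line of $H_0$ to be a double normal, reducing to the situation of Theorem \ref{Thm_cw+H0=disk}. You merely make explicit the inequality chain $|K\cap M|\le |P_M(K)|\le w$ that the paper leaves implicit in its phrase ``the conclusion follows again as in the proof of Theorem \ref{Thm_cw+H0=disk}.''
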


\begin{proof}
The argument is very similar to the proof  of Theorem \ref{Thm_cw+H0=disk}. Since the normal field of $K$ covers the interior, there is a double normal geodesic $\Gamma$ that belongs to $H_0$. By Lemma \ref{binorm} and Proposition \ref{maxproj}, the length of the section $K\cap \Gamma$ equals the width $w$. Hence, all sections of $K$ on $H_0$ have length $w$, and the conclusion follows again as in the proof of Theorem \ref{Thm_cw+H0=disk}. 
\end{proof}

Finally, to prove the characterization of the disc by properties
$(iii)$ and $(iv)$, we require the following lemma.

\begin{lem}\label{equichordal}
Let $K$ be a convex body with a $C^1$-smooth boundary. Let $p \in
int(K)$ be an equichordal point for $K$. Then a normal geodesic line
passing through $p$ must be a double normal.
\end{lem}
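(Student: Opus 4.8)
The plan is to let the candidate normal geodesic rotate about $p$ and exploit the equichordal hypothesis: it forces the two sub-segments into which $p$ divides each chord to trade length at a constant total, and this is exactly what transfers the perpendicularity from one endpoint to the other.

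First I would fix geodesic polar coordinates centered at $p$ and parametrize the lines of $H_p$ by their angle $\theta$. For each $\theta$ the line meets $\partial K$ in two points $A(\theta)$ and $B(\theta)$, and I write $a(\theta)=\big|[p,A(\theta)]\big|$ and $b(\theta)=\big|[p,B(\theta)]\big|$ for the two segment lengths, so that the chord length is $a(\theta)+b(\theta)$. Since $p$ is equichordal, this sum is a constant $c$, and differentiating gives $a'(\theta)+b'(\theta)=0$. Before that I should record that $A(\theta),B(\theta)$ (hence $a,b$) depend differentiably on $\theta$: because $\partial K$ is $C^1$ and $p$ is interior, a line through $p$ crosses $\partial K$ transversally at each endpoint — a tangency would make the line a supporting geodesic, which is impossible through an interior point — so the implicit function theorem applies.

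The geometric core is the equivalence: \emph{the chord is normal to $\partial K$ at an endpoint if and only if the corresponding distance function is critical there.} Indeed, since $A(\theta)\in\partial K$, its velocity $V_A=\frac{d}{d\theta}A(\theta)$ is tangent to $\partial K$ at $A$; in the polar frame at $p$ it decomposes as $V_A=a'(\theta)\,\partial_r+\partial_\varphi$, where $\partial_r$ points along the chord (the segment $[p,A]$ lies along the rotating line) and $\partial_\varphi$ is the angular field, of length $\sinh a\neq 0$. Hence $\langle V_A,\partial_r\rangle=a'(\theta)$, so $V_A\perp\partial_r$ — that is, the tangent to $\partial K$ at $A$ is perpendicular to the chord — if and only if $a'(\theta)=0$; the nonvanishing of the $\partial_\varphi$ component rules out degeneracy. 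The same computation at $B$ gives the analogous statement with $b$, and is consistent with the first variation of length $\frac{d}{d\theta}\big|[A,B]\big|=a'+b'$.

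With these in hand the conclusion is immediate. Let $\Gamma=n_K(x_0)$ be a normal geodesic through $p$, occurring at angle $\theta_0$ with $A(\theta_0)=x_0$, so $\Gamma$ is normal to $\partial K$ at $A(\theta_0)$. By the equivalence, $a'(\theta_0)=0$; by the equichordal identity $a'+b'\equiv 0$, also $b'(\theta_0)=0$; and then the equivalence applied at $B$ shows that $\Gamma$ is normal to $\partial K$ at $B(\theta_0)$ as well, so $\Gamma$ is a double normal. I expect the main obstacle to be the rigorous justification of the velocity decomposition and of the differentiability of the endpoints in the hyperbolic metric; once the \emph{normal $\iff$ critical-distance} equivalence is secured, the equichordal hypothesis finishes the argument through the single identity $a'+b'=0$.
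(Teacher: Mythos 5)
Your proposal is correct and follows essentially the same route as the paper: the paper also writes the chord as the sum of the two distances $g(t)+g(t+\pi)=c$ from the equichordal point to antipodal boundary points, differentiates this identity, and uses the fact that criticality of the distance function is equivalent to normality of the chord at the corresponding endpoint. The only difference is one of presentation: you make explicit (via the velocity decomposition in geodesic polar coordinates) the ``normal $\iff$ critical'' equivalence that the paper invokes without proof, which is a welcome addition rather than a deviation.
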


\begin{proof}
Let $x\in \partial K$ such that $p\in n_K(x)$. For each
$y \in \partial K$, let $\Gamma_y$ be the geodesic passing through
$y$ and $p$, and let $y'$ the other boundary point of $K$ belonging
to $\Gamma_y$. We consider a $C^1$ parametrization $r(t)$,
$t\in [0,2\pi)$ of $\partial K$, such that $r(0)=x$, and with the
property that if $r(t)=y$ for $t\in [0,\pi)$, then $r(t+\pi)=y'$.

We define the function $g:[0, 2\pi)\to ]\R^+$ to be the distance from
the equichordal point $p$ to the corresponding boundary point of $K$,
i.e. $g(t) = |[p,r(t)]|$. The equichordal hypothesis says that for
some constant $c$, any $t\in [0, 2\pi)$ satisfies
$g(t) + g(t+\pi) = c$. Since the normal geodesic to $K$ at $x$ passes
through $p$, we have that $g'(0)=0$. Then, for small $\epsilon$,
\[
   g(\epsilon)=g(0)+ O(\epsilon^2),
\]
and 
\[
   g(\pi +\epsilon)=g(\pi)+ \epsilon g'(\pi) +O(\epsilon^2).
\]
Adding the previous two equations we obtain that for every $\epsilon$ small enough
\[
  c=c+\epsilon g'(\pi)+ O(\epsilon^2)
\]
which implies that $g'(\pi)=0$, thus $n_K(x)$ is a double normal.
\end{proof}

\begin{thm}\label{coro2}
Let $K$ be a convex body, containing the origin in its interior, such that $\partial K$ is a $C^1$-smooth curve. If  every section of $K$ by a geodesic from $H_0$ has constant length $c_1$, and every projection of $K$ on a geodesic in $H_0$ has constant length $c_2$, then $K$ is a disc (and $c_1=c_2$).
\end{thm}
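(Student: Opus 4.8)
The plan is to combine the two hypotheses so that each of the earlier characterizations applies. The origin $O$ is an equichordal point for $K$, since every section of $K$ by a geodesic from $H_0$ has the same length $c_1$. By Lemma \ref{normfield}, the normal field of $K$ covers the interior, so there is at least one point $x_0 \in \partial K$ with $O \in n_K(x_0)$. By Lemma \ref{equichordal}, this normal geodesic $n_K(x_0)$ is then a double normal of $K$. The idea is to upgrade this single double normal into the statement that \emph{every} geodesic in $H_0$ is a double normal, which by Lemma \ref{binorm} forces sections and projections to coincide on all of $H_0$, tying together $c_1$ and $c_2$.

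The key step I would carry out is the following. Fix any geodesic $\Gamma \in H_0$, intersecting $\partial K$ at points $M, N$. Using Lemma \ref{binorm}, the projection $P_\Gamma(K)$ contains the section $K \cap \Gamma = [M,N]$, with equality precisely when $\Gamma$ is a double normal. Now I would argue by a continuity/comparison argument across the family $H_0$: consider the real-valued functions on $H_0$ (parametrized by the angle of the geodesic through $O$) given by $\gamma \mapsto |K \cap \gamma| = c_1$ (constant by hypothesis) and $\gamma \mapsto |P_\gamma(K)| = c_2$ (constant by hypothesis). Since projections always dominate sections on a common geodesic through $O$ (as $K \cap \gamma \subseteq P_\gamma(K)$ when both are measured along $\gamma$, after accounting for the fact that the projection is taken onto $\gamma$ itself), we get $c_1 \le c_2$. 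The point where these agree is exactly the double normal $n_K(x_0)$ found above, where $|K \cap n_K(x_0)| = |P_{n_K(x_0)}(K)|$, forcing $c_1 = c_2$. Once $c_1 = c_2$, the inequality $|K \cap \gamma| \le |P_\gamma(K)|$ becomes an equality for every $\gamma \in H_0$, so by Lemma \ref{binorm} every geodesic in $H_0$ is a double normal.

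The conclusion then follows as in the proof of Theorem \ref{Thm_cw+H0=disk}: since every geodesic through $O$ is a double normal, the radial segments $K \cap \gamma$ all have their endpoints meeting $\partial K$ perpendicularly, and their common length $c_1$ forces $\partial K$ to be equidistant from $O$ in every direction, i.e.\ $K$ is a disc centered at $O$. The subtle point worth verifying carefully is that $P_\gamma(K)$ and $K \cap \gamma$ are being compared along the same geodesic $\gamma$, so that the inequality $|K \cap \gamma| \le |P_\gamma(K)|$ of Lemma \ref{binorm} is legitimately the comparison between $c_1$ and $c_2$.

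I expect the main obstacle to be justifying that the equality $c_1 = c_2$ holds, rather than merely $c_1 \le c_2$. The clean way is the one above: locate a single double normal through $O$ via Lemmas \ref{normfield} and \ref{equichordal}, and observe that on that geodesic the section and projection coincide by Lemma \ref{binorm}, so their constant values must be equal. One should check that Lemma \ref{equichordal} applies without assuming constant width, which it does, since its hypotheses are only $C^1$-smoothness and that $O$ is equichordal — both furnished here by the constant-section assumption.
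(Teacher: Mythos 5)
Your proof is correct and takes essentially the same route as the paper's: you locate a normal geodesic through the origin (the paper does this by taking the closest boundary point to $O$, which is exactly the argument inside Lemma \ref{normfield}), upgrade it to a double normal via Lemma \ref{equichordal}, deduce $c_1=c_2$ from Lemma \ref{binorm} on that geodesic, and then use the containment $K\cap\gamma\subseteq P_\gamma(K)$ together with Lemma \ref{binorm} to conclude that every geodesic in $H_0$ is a double normal, so $\partial K$ is orthogonal to all rays through $O$ and $K$ is a disc.
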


\begin{proof}
Let $x\in\partial K$ be the closest point in $\partial K$ to the origin. Then the line $L$ containing the segment $[0,x]$ is a normal line of $K$. By Lemma \ref{equichordal}, $L$ is a double normal line, hence $P_L(K)=K\cap L$ by Lemma \ref{binorm}. This implies $c_1=c_2$. But then all projections and sections on $H_0$ have the same length, and therefore every geodesic in $H_0$ is a double normal geodesic of $K$. Since $\partial K$ is a curve orthogonal to all rays through the origin, $K$ is a disc.
\end{proof}

\begin{figure}[ht]
	\centering
	\includegraphics[scale=1]{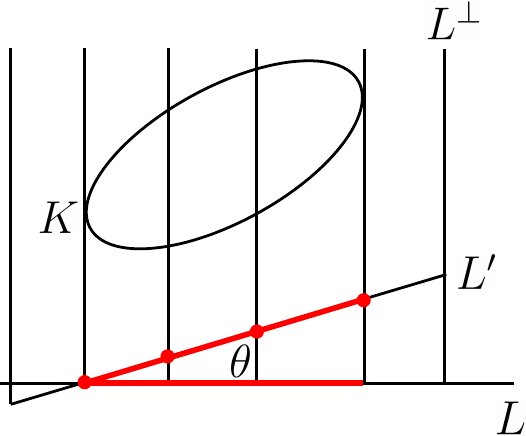}
	\caption{Orthogonal and non-orthogonal projections in Euclidean setting.}
	\label{nonorth}
\end{figure}

\subsection{Orthogonal and non-orthogonal projections}	
As we have seen, the lengths of projections and the width are two distinct sources of information in the hyperbolic setting. Theorem \ref{Thm_cw+H0=disk} is based on using both to obtain a hyperbolic result which has no Euclidean analogue. Another example of this situation is given by orthogonal and non-orthogonal projections: In the Euclidean plane, knowing the length of an orthogonal projection of a convex body $K$ on a line $L$ gives us the same information as knowing the length of a non-orthogonal projection on a line $L'$ forming a known  angle $\theta$ with $L$ (see Figure \ref{nonorth}). However, in the hyperbolic setting, knowing the lengths of orthogonal and non-orthogonal projections provides additional information that allows us to recover the body. 

Let $K$ be a convex body and let $p$ be an interior point of $K$.  For each line $L \in H_p$ we consider another line $L'$, chosen in the following way. We can assume that $L$ is a vertical line in the upper half-plane model. If $C_1,C_2$ are the two supporting geodesics of $K$ that are perpendicular to $L$, we choose  another vertical line $L'$ at (Euclidean) distance $t>0$ from $L$, such that $L'$ intersects both supporting geodesics (see Figure \ref{hypnonorth}). By compactness, the positive number $t$ can be chosen as the same for all lines $L \in H_p$. 

\begin{propo}
Let $K$ be a convex body and let $p$ be an interior point of $K$. Then $K$ is determined by the  projection lengths
\[
  \{ |P_L(K)|,L\in H_p \} \cup  \{ |P_{L'}(K)|,L\in H_p \}.
\]
\end{propo}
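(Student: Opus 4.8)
The plan is to use Lemma \ref{HB} to reduce the statement to the claim that the listed data determine \emph{every} supporting geodesic of $K$. The organizing observation is that each supporting geodesic of $K$ is ``seen'' by exactly one line of $H_p$: given a supporting geodesic $\Gamma$, the perpendicular $n_\Gamma(p)$ from $p$ to $\Gamma$ is a geodesic $L\in H_p$ with $\Gamma\perp L$. Since the supporting geodesics of $K$ perpendicular to a fixed $L$ are precisely the two geodesics $C_1,C_2$ that sandwich $K$ (these are concentric half-circles in the model used below, and there are exactly two of them, the nearest and farthest concentric tangent circles to the base point of $L$), $\Gamma$ must equal $C_1$ or $C_2$ for that $L$. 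Hence it suffices to show that, for each $L\in H_p$, the two lengths $|P_L(K)|$ and $|P_{L'}(K)|$ determine the pair $\{C_1,C_2\}$; then all supporting geodesics are determined and, by Lemma \ref{HB}, so is $K$.

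First I would fix $L\in H_p$ and, applying an isometry, realize $L$ as the vertical line through the origin in the upper half-plane model, so that $L^\perp$ consists of the half-circles centered at the origin. Then $C_1,C_2$ are such half-circles, of radii $r_1<r_2$, and $L'$ is the vertical line $x=t$. Along a vertical line the hyperbolic length element is $dy/y$, while $C_i$ meets $L$ at height $r_i$ and meets $L'$ at height $\sqrt{r_i^2-t^2}$; the condition $t<r_1$ (equivalently, that $L'$ meets both $C_1$ and $C_2$) is exactly what the uniform choice of $t$ guarantees. This gives
\[
|P_L(K)|=\log\frac{r_2}{r_1},
\qquad
|P_{L'}(K)|=\log\frac{\sqrt{r_2^2-t^2}}{\sqrt{r_1^2-t^2}},
\]
where the second length is the segment of $L'$ cut off by $C_1$ and $C_2$, i.e.\ the non-orthogonal projection.

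It then remains to solve this $2\times2$ system. Writing $a=e^{|P_L(K)|}=r_2/r_1>1$ and $b=e^{2|P_{L'}(K)|}=(r_2^2-t^2)/(r_1^2-t^2)$ and substituting $r_2=ar_1$ yields $(a^2-b)r_1^2=t^2(1-b)$, so that $r_1^2=t^2(b-1)/(b-a^2)$. A direct comparison shows $b>a^2>1$ (since $r_1<r_2$), so the right-hand side is positive and $r_1$, hence $r_2=ar_1$, is uniquely recovered. Thus $\{C_1,C_2\}$ is determined for every $L\in H_p$, and the reduction in the first paragraph completes the proof.

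I expect the main obstacle to be the reduction itself, namely verifying that the assignment $\Gamma\mapsto n_\Gamma(p)$ exhibits every supporting geodesic as one of the two perpendicular supporting geodesics $C_1,C_2$ of some $L\in H_p$, so that recovering all such pairs recovers the entire supporting family; one should also confirm carefully that there are exactly two supporting geodesics perpendicular to each $L$. Once these geometric points are in place, the coordinate computation and the algebra are routine.
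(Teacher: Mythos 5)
Your proof is correct and follows essentially the same route as the paper: normalize $L$ to the $y$-axis in the upper half-plane, express $|P_L(K)|$ and the non-orthogonal projection length on $L'$ in terms of the radii of the two concentric supporting half-circles $C_1,C_2$, solve for those radii, and recover $K$ as the intersection of the resulting strips. You simply make explicit two points the paper leaves implicit --- the reduction via Lemma \ref{HB} showing every supporting geodesic arises as some $C_i$, and the actual solution of the $2\times 2$ system with the positivity check $b>a^2>1$.
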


\begin{proof}
\begin{figure}[ht]
	\centering
	\includegraphics[scale=1]{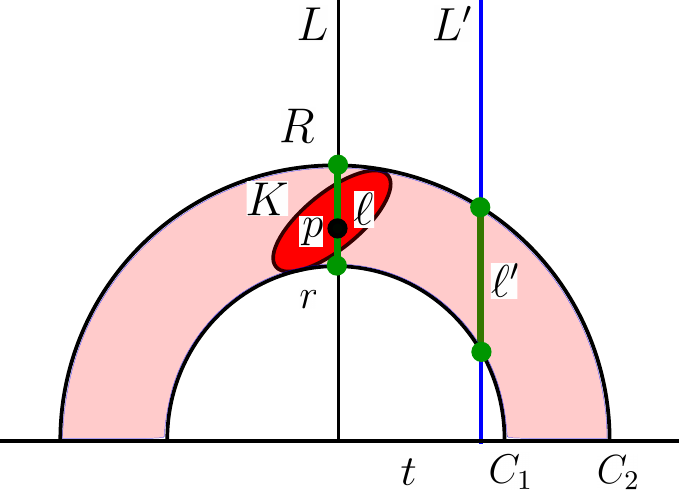}
	\caption{Orthogonal and non-orthogonal projections in the hyperbolic half-space.}
	\label{hypnonorth}
\end{figure}

For a given geodesic $L \in H_p$, we will call $\ell=|P_L(K)|$ and $\ell'=|P_{L'}(K)|$. Let $R$ be the radius of $C_1$ and $r$ be the radius of $C_2$ (see Figure  \ref{hypnonorth}). Then
\[
\ell=\ln \left(\frac{R}{r} \right)  \;\;\;\; {\mbox{ and }}  \ell'=\ln \left(\frac{ \sqrt{R^2-t^2}}{\sqrt{r^2-t^2}} \right).
\]
Therefore, $R$ and $r$ are determined from $\ell$, $\ell'$ and $t$, and the intersection of all such strips between supporting geodesics uniquely determines the body.
\end{proof}

\bigskip

\section{Appendix}
We show that the hyperbolic Reuleaux triangle defined below has constant width and non constant sections and projections on $H_0$. We work in the upper-plane model of $\mathbb{H}^2$. On the circle with hyperbolic center $(0,1)$ and hyperbolic radius 1, we choose three equidistant points, with one of them being $(0,e)$ (see Figure \ref{reuleaux}). For each of the three points, we trace the hyperbolic circle centered at that point and containing the other two. The resulting figure $T$ is the hyperbolic Reuleaux triangle, which has constant width since given one of the circular sides of the triangle, all of its normals pass through the opposite vertex of the triangle. The width can be computed by finding the distance between the intersection points of the Reuleaux triangle with the $y$-axis, and its value is $1.78774413\ldots$. 
\begin{figure}[h]
    \centering
    \includegraphics[height=2in]{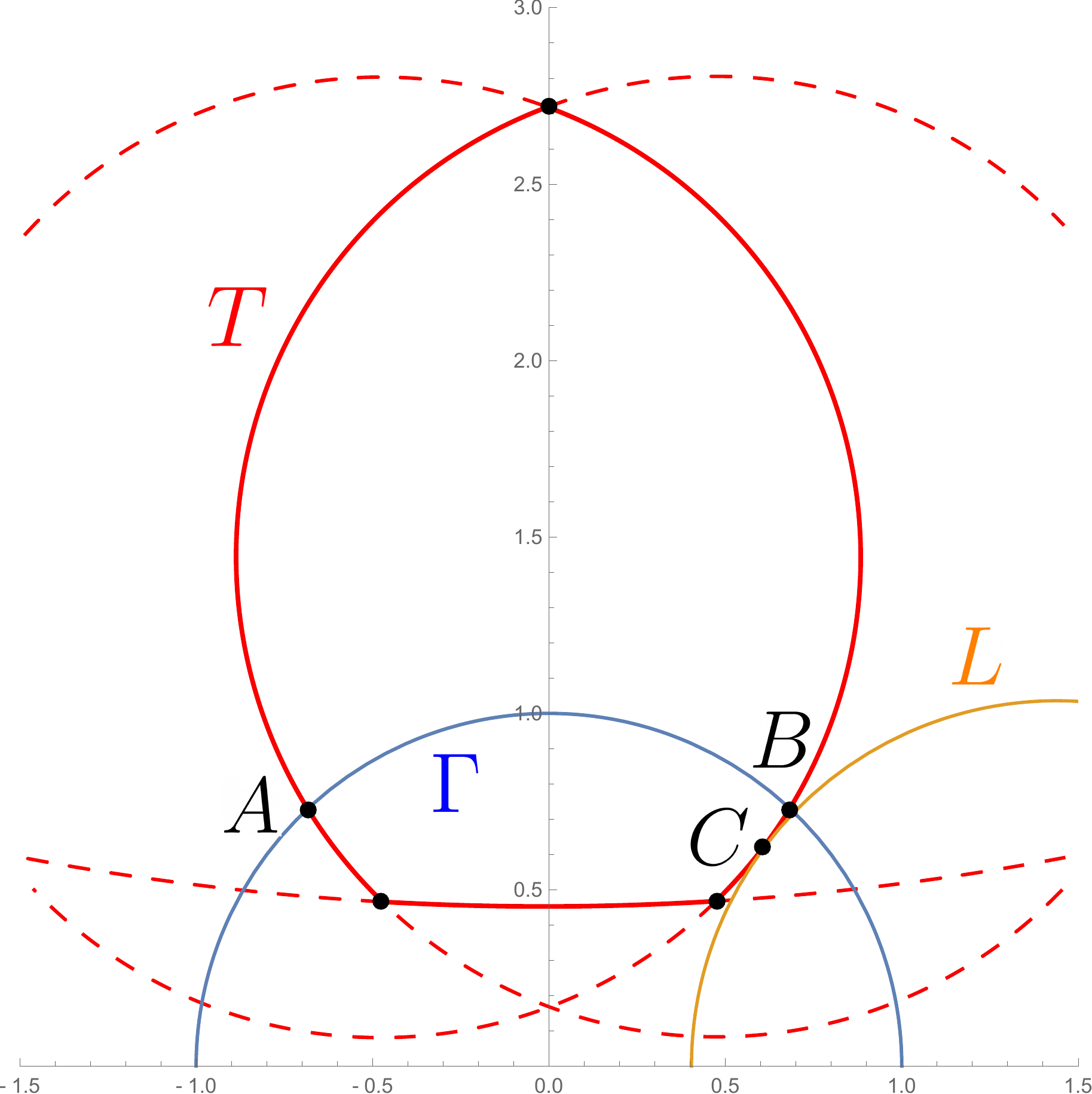}
    \caption{Reuleaux triangle}
    \label{reuleaux}
\end{figure}

We show that there are two geodesics in $H_0$ on which the sections and projections of $T$ are different. First, on the vertical line passing through $(0,1)$, the section and the projection of the Reuleaux triangle coincide, and their length is equal to the width of $T$. Consider now the geodesic $\Gamma$ through $(0,1)$ forming a 90 degree angle with the vertical geodesic. We explicitly determine the points $A$ and $B$ where $\Gamma$ intersects the boundary of $T$ by solving the system formed by the equations of these two circles. The length of the section of $T$ by $\Gamma$ can then be exactly computed with Mathematica (though the expression is very long), and its approximate value is $1.67461854\dots$. Similarly, to compute the length of the projection of $T$ on this geodesic, we first find the geodesic $L$, normal to $\Gamma$, that is tangent to $T$  (the point of tangency is $C$ in Figure \ref{reuleaux}); next, we explicitly find the point of intersection of $L$ and $\Gamma$, again by solving the system of the equations of the circles. The length of the projection of $T$ onto $\Gamma$ is approximately $1.7133762\ldots$.

We observe that one can also construct a $C^1$-smooth body of constant width which is a perturbation of a disc and does not have constant sections nor projections through $H_0$.

\end{document}